    \setlist[enumerate]{label=(\roman*),noitemsep,topsep=3pt} 
    \setlist[itemize]{label={$\cdot$},noitemsep,topsep=3pt} 
\definecolor{gray}{rgb}{0.5,0.5,0.5}
    \newdimen\nodeSize
    \newdimen\nodeDist
    \tikzset{
        position/.style args={#1:#2 from #3}{
            at=(#3.#1), anchor=#1+180, shift=(#1:#2)
        }
    }
\small\color{gray},
\newcommand\ackname{Acknowledgements}
  \newenvironment{acknowledgements}{%
      \titlepage
      \null\vfil
      \@beginparpenalty\@lowpenalty
      \begin{center}%
        \bfseries \ackname
        \@endparpenalty\@M
      \end{center}}%
     {\par\vfil\null\endtitlepage}
\newtheorem{theorem}{Theorem}[section]
\newtheorem{corollary}[theorem]{Corollary}
\newtheorem{proposition}[theorem]{Proposition}
\newtheorem{lemma}[theorem]{Lemma}
\theoremstyle{definition}
\newtheorem{remark}[theorem]{Remark}
\newtheorem{example}[theorem]{Example}
\newcommand{\thistime}{\expandafter\calctimeA\pdfcreationdate\@nil} 
\def\calctimeA#1:#2#3#4#5#6#7#8#9{\calctimeB}
\def\calctimeB#1#2#3#4#5\@nil{#1#2:#3#4}
\newcommand{\note}[1]{\textcolor{red}{#1}}
\newcommand{\fs}[1]{\textcolor{cyan}{#1}}
\newcommand{\zen}{\mathsf{Z}} 
\renewcommand{\S}{\mathsf{S}}
\newcommand{\A}{\mathsf{A}}
\newcommand{\D}{\mathsf{D}}
\newcommand{\numberset}[1]{\mathbbm{#1}}
\newcommand{\N}{\numberset{N}}
\newcommand{\Z}{\numberset{Z}}
\newcommand{\sym}{\operatorname{\mathsf{Sym}}}
\newcommand{\aut}{\operatorname{\mathsf{Aut}}}
\newcommand{\lmlt}{\operatorname{\mathsf{LMlt}}}
\newcommand{\dis}{\operatorname{\mathsf{Dis}}}
\newcommand{\aff}{\operatorname{\mathsf{Aff}}}
\renewcommand{\ker}{\operatorname{\mathsf{ker}}}
\newcommand{\im}{\operatorname{\mathsf{Im}}}
\renewcommand{\min}{\operatorname{\mathsf{min}}}
\newcommand{\conj}{\operatorname{\mathsf{Conj}}}
\newcommand{\core}{\operatorname{\mathsf{Core}}}
\renewcommand{\O}{\operatorname{\mathcal{O}}}
\newcommand{\fix}{\operatorname{\mathsf{Fix}}}
\newcommand{\+}{\mkern2mu} 
\let\perogni\forall 
\let\esiste\exists
\renewcommand{\forall}{\perogni\+}
\renewcommand{\exists}{\esiste\+}
\def\setof#1#2{\{#1\,\colon #2\}}
\newcommand{\mo}{^{-1}} 
\renewcommand{\t}{\rhd}
\title{On Core Quandles}
\author{Filippo Spaggiari\footnote{Partially supported by GAUK and the GAČR grant no. 22-19073S} and Marco Bonatto\\ 
\small{\href{spaggiari@karlin.mff.cuni.cz}{spaggiari@karlin.mff.cuni.cz}}\\\small{Department of Algebra, Faculty of Mathematics \& Physics, Charles University Prague, Czech Republic}}
\date{\today}
\begin{document}

\maketitle

\begin{abstract}
    We characterize several properties of core quandles in terms of the properties of their underlying groups. Specifically, we characterize connected cores providing an answer to an open question in \cite{saito} and present a standard homogeneous representation for them, which allows us to prove that simple core quandles are primitive.
\end{abstract}


\section*{Introduction}

Quandles are binary algebras introduced in the development of the theory of knots as a way to construct knot invariants \cite{J, Matveev}. The origins of quandle theory trace back to self-distributive quasigroups, or what are now referred to as latin quandles, which have been extensively studied in relation to modern theory. 

Later on, they have proven to be deeply connected to various other mathematical structures, such as solutions to the set-theoretical quantum Yang-Baxter equation \cite{etingof2001indecomposable, etingof1999set} and pointed Hopf algebras \cite{AG}. Historical examples also stem from group conjugation, which ultimately led to the development of the knot quandle by Joyce and Matveev. Another line of research was inspired by the abstract properties of reflections on differentiable manifolds, giving rise to what is now known as involutory quandles, that is the class of structures where the identity $x * (x * y) = y$ holds \cite{Loos}. 

Core quandles represent a particular class of involutory quandles constructed from a group $ G $, defined by the operation $ x * y = xy\mo x $ for all $ x, y \in G $. The study of cores as binary algebraic structures has its roots in the analysis of quasigroups and loops (see \cite{belousov1967foundations}), which served as a foundation for the development of modern core theory.

From the perspective of universal algebra, core quandles can be regarded as reducts of groups, inheriting numerous structural properties from the operation of $ G $. Indeed, several properties of the quandle $ \core(G) $ can be derived directly from those of the underlying group $ G $. The paper was motivated by an open question in \cite{saito}, in which a characterization of connected quandles was asked. Previous investigations have addressed properties such as latinness \cite{kano1976finite}, faithfulness \cite{umaya1976symmetric}, right self-distributivity \cite{belousov1963class}, and mediality (an unpublished result by B. Roszkowska). This paper contributes to this ongoing research on core quandles in purely group-theoretic terms by providing a characterization of other properties, among which connectedness, simplicity, primitivity, abelianness and nilpotency in the sense of \cite{comm}. To ensure comprehensiveness, we also revisit existing results, rephrasing them in the modern framework of quandle theory and presenting independent, complete proofs.

The structure of the paper is as follows. In Section \ref{sec: preliminaries}, we introduce the foundational concepts and notations of quandle theory. 

Section \ref{sec: hom rep} examines the homogeneous representations of core quandles and the properties of their associated groups of automorphisms. In particular, we use these homogeneous representations to characterize simple and primitive core quandles. In Section \ref{sec dis}, we focus on the displacement group and we establish the characterization of connected core quandles.

In Section \ref{properties}, we investigate the above mentioned characterization theorems for several properties of core quandles in purely group theoretical terms.

In Sections \ref{sec: commutator theory}, we employ tools from universal algebra to examine the notions of abelianness and nilpotence in core quandles (in the sense of \cite{comm}).

In Section \ref{sec: functors} we address the problem of embedding into core quandles, also investigated in \cite{bergman2021core}. Using a categorical language we obtained a characterization of quandles that can be embedded into core quandles. 
Finally, in Section \ref{sec: coloring} we focus on knot colorings by core quandles.

The research presented in this work has been facilitated by the computational tools RIG \cite{RIG} and GAP \cite{GAP4}.

\subsection*{Notation}
Let us collect the group-theoretical notation we are using in the paper. Let $G$ be a group. We denote by $G^{op}$ the \emph{opposite} of $G$, that is the group with operation $x\cdot^{op} y=yx$ for every $x,y\in G$. The map $\iota:x\mapsto x^{-1}$ is an isomorphism between $G$ and $G^{op}$. Let $G$ be acting on a set $X$. We denote by $G_x$ the \emph{stabilizer} in $G$ of $x\in X$, and by $x^G$ the orbit of $x$ under the action of $G$. We say that (the action of) $G$ is {\it semiregular} if $G_x=1$ for every $x\in G$ and {\it regular} if is it transitive and semiregular. We will employ the following standard notation for the canonical left and right actions of a group $G$ on itself: 
\begin{eqnarray*}
\lambda: G\to \sym(G),&\quad  &g\mapsto (\lambda_g: x\mapsto gx),\\ 
\rho: G^{op}\to \sym(G),&\quad  &g\mapsto (\rho_g:x\mapsto xg). 
\end{eqnarray*}
Note that both $\lambda$ and $\rho$ are regular actions and that $[\lambda_g,\rho_h]=1$ for every $g,h\in G$. We also denote by $\widehat{g}$ the inner automorphism by $G$, mapping $x$ to $\lambda_g\rho_g^{-1}(x)=gxg^{-1}$ for every $x\in G$.
We define the {\it squaring map} as $s:G\to G$ such that $s(x)=x^2$ for every $x\in G$. The image $s(G)=\{x^2\colon x\in G\}$ is the \emph{square} of the group $G$.

\section{Core quandles} \label{sec: preliminaries}

Let $Q=(Q, *)$ be a set with a binary operation. We say that $Q$ is a \emph{rack} if all left translations
$$
L_x: Q \rightarrow Q, \quad y \mapsto x*y
$$
are automorphisms of $Q$. Equivalently, we have that $L_x$ is bijective and
$$x*(y*z)=(x*y)*(x*z)$$
holds for every $x,y,z\in Q$. 
We have introduced quandles as algebras with a single binary operation, but since $L_x\mo L_x (y)=y$, it is also common to read $L_x\mo (y)$ as a second binary operation, the left division of the quandle, denoted $x\backslash y$. In the finite case the algebraic structures $(Q,*)$ and $(Q,*,\backslash)$ are equivalent, so we can study finite racks avoiding the division without loss of generality. If a rack $Q$ is idempotent, that is, if $x*x=x$ for all $x \in Q$, then $Q$ is called \emph{quandle}. In particular we can define the {\it left multiplication group} of $Q$ as
$$
\lmlt(Q)=\left\langle L_x: x \in Q\right\rangle
$$
and the \emph{displacement group} of $Q$ as
$$
\dis(Q)=\left\langle L_xL_y\mo : x,y \in Q\right\rangle.
$$
It is known that $\lmlt(Q)$ is a normal subgroup of the automorphism group $\aut(Q)$. 

A rack $Q$ is said to be:
\begin{enumerate}
    \item \emph{homogeneous} (resp. \emph{connected}) if the natural action of $\aut(Q)$ (resp. $\lmlt(Q)$) is transitive on $Q$.
    \item \emph{projection} if $x*y=y$ for all $x,y\in Q$.
    \item a \emph{crossed set} if it is a quandle and for all $x,y\in Q$ we have $x\ast y = y$ if and only if $y\ast x = x$.
\end{enumerate}

Quandles can be constructed using groups and their automorphisms in several ways.

\begin{example}
\text{ }
\begin{enumerate}
\item Let $G$ be a group. It is easy to see that $G$ endowed with the binary operation
\begin{align*}
    x* y &= xyx\mo 
\end{align*}
is a quandle, called \emph{conjugation quandle} over $G$ and denoted $\conj(G)$.

\item Let $G$ be a group, $f\in \aut(G)$ and $H\leq \fix(f)=\setof{x\in G}{f(x)=x}$. The set $G/H$ endowed with the operation 
$$xH*yH=xf(x\mo y)H$$ 
for every $x,y\in G$ is a quandle. We denote such quandle as $\mathcal{Q}(G,H,f)$ and we refer to it as {\it coset quandle}. If $G$ is abelian and $H=1$, we say that it is {\it affine}. In this case, we use the notation $\aff(G,f)$.
\end{enumerate}
\end{example}

It can be proven that homogeneous quandles are exactly those that can be built using the above coset construction (see, for instance \cite{hsv}). Note that the statement of \cite[Proposition 3.5]{hsv} holds even under slightly weaker assumptions, by using the same proof.

\begin{proposition}[\cite{hsv}, Proposition 3.5]\label{homogeneous_rep}
    Let $Q$ be a quandle and let $x\in Q$. Let $G\leq \aut(Q)$ be acting transitively on $Q$ and such that $\widehat{L_x}(G)=G$. Then $Q\cong \mathcal{Q}(G,G_x,\widehat{L_x})$.
\end{proposition}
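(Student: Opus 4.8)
The plan is to exhibit an explicit isomorphism $\varphi\colon \mathcal{Q}(G,G_x,\widehat{L_x})\to Q$ by sending the coset $gG_x$ to the point $g(x)\in Q$. First I would check this is well-defined and bijective: since $G$ acts transitively on $Q$, the orbit map $g\mapsto g(x)$ is surjective, and $g(x)=h(x)$ if and only if $h^{-1}g\in G_x$, i.e. $gG_x=hG_x$; so $\varphi$ descends to a bijection between $G/G_x$ and $Q$. This step is routine and uses only transitivity, not the hypothesis $\widehat{L_x}(G)=G$.

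The substantive step is verifying that $\varphi$ is a quandle homomorphism. Unwinding the coset-quandle operation, for $g,h\in G$ we have $gG_x * hG_x = g\,\widehat{L_x}(g^{-1}h)\,G_x$, and here the hypothesis $\widehat{L_x}(G)=G$ is exactly what guarantees $\widehat{L_x}(g^{-1}h)\in G$ so that this product lands back in $G$ and the coset quandle is actually defined on all of $G/G_x$. Applying $\varphi$, the left-hand side becomes $g\,\widehat{L_x}(g^{-1}h)(x)$. Now $\widehat{L_x}(g^{-1}h) = L_x\,(g^{-1}h)\,L_x^{-1}$ as automorphisms of $Q$, so evaluating at $x$ and using $L_x^{-1}(x)=x$ (idempotence: $L_x(x)=x*x=x$), we get $\widehat{L_x}(g^{-1}h)(x) = L_x(g^{-1}h)(x)$. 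Hence the left-hand side equals $gL_x\big(g^{-1}(h(x))\big)$. On the other hand, $\varphi(gG_x)*\varphi(hG_x) = g(x)*h(x) = L_{g(x)}(h(x))$, so it remains to show $L_{g(x)} = g\,L_x\,g^{-1}$ as maps on $Q$; but this is the standard conjugation formula $L_{f(y)} = f L_y f^{-1}$ valid for any $f\in\aut(Q)$, applied with $f=g$ and $y=x$. Putting these together gives $\varphi(gG_x * hG_x) = \varphi(gG_x)*\varphi(hG_x)$, so $\varphi$ is an isomorphism.

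I expect no serious obstacle: the only subtlety is keeping straight the two roles played by elements of $G$ — as automorphisms of $Q$ acting on points, and as representatives of cosets in the formal coset-quandle construction — and making sure the hypothesis $\widehat{L_x}(G)=G$ is invoked precisely where the coset quandle $\mathcal{Q}(G,G_x,\widehat{L_x})$ needs $\widehat{L_x}$ to restrict to an automorphism of $G$ stabilizing $G_x$ pointwise. For the latter, note $G_x$ is fixed pointwise by $\widehat{L_x}$ since for $g\in G_x$ we have $\widehat{L_x}(g)(x) = L_x g L_x^{-1}(x) = L_x g(x) = L_x(x) = x$, but one actually needs $\widehat{L_x}(g)=g$ on all of $Q$; this follows because $\widehat{L_x}(g)$ and $g$ are both automorphisms agreeing on the orbit — more carefully, one checks $G_x \le \fix(\widehat{L_x})$ directly from $L_x L_x^{-1} = \id$ commuting appropriately, which is exactly the hypothesis needed to form $\mathcal{Q}(G,G_x,\widehat{L_x})$. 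This is the same argument as in \cite[Proposition 3.5]{hsv}; the weakening is only that one does not assume $G=\lmlt(Q)$ or $G=\aut(Q)$, and the proof never uses more than transitivity together with $\widehat{L_x}(G)=G$.
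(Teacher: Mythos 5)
Your route is the standard one that the paper itself appeals to (it simply cites \cite[Proposition 3.5]{hsv} and remarks that the same proof works under the weaker hypotheses): send $gG_x\mapsto g(x)$, use transitivity and the stabilizer to get a well-defined bijection, and check the operation via $L_x^{-1}(x)=x$ and the conjugation formula $L_{g(x)}=gL_xg^{-1}$ for $g\in\aut(Q)$. That central computation in your proposal is correct, and you also invoke $\widehat{L_x}(G)=G$ exactly where it is needed, namely to make $\widehat{L_x}$ restrict to an automorphism of $G$ so that the coset quandle is defined.

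The one step you have not actually established is $G_x\leq \fix(\widehat{L_x})$, which is part of what is needed for $\mathcal{Q}(G,G_x,\widehat{L_x})$ to make sense (well-definedness of the coset operation uses that the chosen subgroup is fixed pointwise by the automorphism). Your computation $\widehat{L_x}(g)(x)=x$ only shows that $G_x$ is $\widehat{L_x}$-invariant, and the justification you then offer is not valid: two automorphisms of $Q$ agreeing at the point $x$ (or on an orbit) need not coincide, and ``$L_xL_x^{-1}=\id$ commuting appropriately'' is not an argument. The repair is one line, using the same identity you already use later: for $g\in G_x$ one has $gL_xg^{-1}=L_{g(x)}=L_x$, so $g$ commutes with $L_x$ and hence $\widehat{L_x}(g)=L_xgL_x^{-1}=g$ in $\aut(Q)$. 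With that sentence inserted, your proof is complete and coincides with the cited one.
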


The displacement group plays a very important role for quandles, for instance it can be used to build connected quandles with the coset representation (indeed the orbits of the displacement group and the left multiplication group coincide). 

\begin{proposition}[\cite{hsv}, Theorem 4.1, and \cite{J}]
    Let $Q$ be a connected quandle and $x\in Q$. Then $Q\cong \mathcal{Q}(\dis(Q),\dis(Q)_x,\widehat{L_x})$.
\end{proposition}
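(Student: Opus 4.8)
The plan is to deduce this statement from Proposition \ref{homogeneous_rep} by choosing $G=\dis(Q)$ and verifying the two hypotheses: that $\dis(Q)$ acts transitively on $Q$, and that $\widehat{L_x}(\dis(Q))=\dis(Q)$. For the first point, I would invoke the standard fact (already alluded to in the excerpt) that the orbits of $\dis(Q)$ and $\lmlt(Q)$ coincide on a quandle; since $Q$ is connected, $\lmlt(Q)$ is transitive, hence so is $\dis(Q)$. If one prefers a self-contained argument, transitivity of $\dis(Q)$ follows because for any $x,y,z\in Q$ one has $L_z(x)=L_zL_x\mo(x)\in x^{\dis(Q)}$ after noting $L_x(x)=x$, so every $\lmlt(Q)$-translate of $x$ already lies in the $\dis(Q)$-orbit of $x$; transitivity of $\lmlt(Q)$ then forces $\dis(Q)$ to be transitive.

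For the second point, I would check that $\dis(Q)$ is normal in $\lmlt(Q)$ — indeed it is the kernel of the natural map $\lmlt(Q)\to\Z$ sending each $L_y$ to $1$ (well-defined because the defining relations of a quandle are ``balanced''), or alternatively one checks directly that $L_z (L_xL_y\mo) L_z\mo = (L_zL_x\mo)\cdot(L_zL_y\mo)\mo\cdot$-type conjugates lie in $\dis(Q)$ using the rack identity $L_zL_xL_z\mo = L_{z*x}$. Since $L_x\in\lmlt(Q)$, conjugation by $L_x$ — which is exactly $\widehat{L_x}$ restricted to $\lmlt(Q)$ — preserves the normal subgroup $\dis(Q)$, giving $\widehat{L_x}(\dis(Q))=\dis(Q)$. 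With both hypotheses in hand, Proposition \ref{homogeneous_rep} applied to $G=\dis(Q)$ yields $Q\cong\mathcal{Q}(\dis(Q),\dis(Q)_x,\widehat{L_x})$, which is the claim.

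I do not expect a serious obstacle here: the result is essentially a specialization of Proposition \ref{homogeneous_rep}, and the only content is the two routine verifications above. The mildly delicate point is making the normality argument clean — in particular justifying that $L_zL_xL_z\mo=L_{z*x}$ (a direct consequence of the fact that $L_z$ is a quandle automorphism, so $L_z L_x L_z\mo = L_{L_z(x)}$) and hence that conjugation of a generator $L_xL_y\mo$ of $\dis(Q)$ by any $L_z$ gives $L_{z*x}L_{z*y}\mo\in\dis(Q)$. This shows $\dis(Q)\norm\lmlt(Q)$, and since $\widehat{L_x}$ is conjugation by $L_x\in\lmlt(Q)$, the subgroup $\dis(Q)$ is $\widehat{L_x}$-invariant, as needed.
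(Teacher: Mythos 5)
Your derivation is correct, and it is the intended one: the paper does not prove this proposition at all (it is quoted from \cite{hsv} and \cite{J}), and the standard argument is exactly the specialization of Proposition \ref{homogeneous_rep} to $G=\dis(Q)$ that you carry out, with $\widehat{L_x}$-invariance coming from $\dis(Q)\norm\lmlt(Q)$ (via $L_zL_yL_z\mo=L_{L_z(y)}$) and transitivity from the coincidence of $\dis(Q)$- and $\lmlt(Q)$-orbits, which the paper itself records just before the statement. Two small caveats on your write-up. First, the ``self-contained'' transitivity sketch as stated only shows $L_z(x)\in x^{\dis(Q)}$ for a single translation; to reach every $\lmlt(Q)$-translate of $x$ you still need the normality you prove, together with the observation that all generators $L_z$ agree modulo $\dis(Q)$, so that $\lmlt(Q)=\dis(Q)\langle L_x\rangle$ (the fact the paper quotes from \cite[Lemma 1.4]{semimedial}) and any $w\in\lmlt(Q)$ moves $x$ the same way as some element of $\dis(Q)$ because $L_x(x)=x$. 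Second, the alternative justification of normality via a homomorphism $\lmlt(Q)\to\Z$ sending each $L_y$ to $1$ is false in general: in an involutory quandle (in particular in any core quandle) $L_y^2=1$, so no such homomorphism exists; what is true is only that $\lmlt(Q)/\dis(Q)$ is cyclic, generated by the image of any $L_x$. Neither caveat damages the proof, since the direct conjugation computation you give for the generators $L_xL_y\mo$ is sufficient.
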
 
Let $G$ be a group. The binary operation
    \begin{align*}
        x* y &= xy\mo x 
    \end{align*}
endows $G$ with a quandle structure called \emph{core} of $G$, and denoted by $\core(G)$. It is easy to prove that $\core(G)$ is an \emph{involutory} quandle, that is $L_x^2=1$ for all $x\in Q$.

\begin{example}\text{}
\begin{enumerate}
\item Let $A$ be an abelian group. Then $\core(A)=\aff(A,-1)$.
\item    It is easy to prove that $\core(G)$ is projection if and only if $G$ is elementary $2$-abelian.

\end{enumerate}
\end{example}

Core quandles are {\it reducts} of groups, so we have the following.

\begin{remark} \label{not iso iff}
\text{}
    \begin{enumerate}
    \item Every homomorphism of groups $G\to H$ gives rise to a homomorphism of core quandles $\core(G)\to\core(H)$. The converse does not hold, indeed $\core(\Z_3^2\rtimes \Z_3)$ and $\core(\Z_3^3)$ are isomorphic as quandles (but their undelying groups are not). 
    
        \item If $G$ is a group and $H$ is a normal subgroup of $G$ then the canonical map $G\longrightarrow G/H$ is a onto quandle homomorphism from $\core(G)$ onto $\core(G/H)$.  
               
    \end{enumerate}
\end{remark}

We start by analyzing the closure properties of the class of core quandles from a universal algebraic point of view.

\begin{proposition}
    The class of core quandles is closed under direct products.
\end{proposition}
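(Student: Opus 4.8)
The plan is to show that the core construction commutes with direct products, i.e. $\core\bigl(\prod_{i\in I} G_i\bigr) \cong \prod_{i\in I} \core(G_i)$ as quandles, from which the stated closure property follows immediately. First I would fix a family of groups $\{G_i\}_{i\in I}$ and form the direct product group $G = \prod_{i\in I} G_i$ with coordinatewise operations; its underlying set is the same as that of the direct product quandle $\prod_{i\in I} \core(G_i)$. The key observation is that the core operation is computed coordinatewise: for $x=(x_i)_{i\in I}$ and $y=(y_i)_{i\in I}$ in $G$, we have in $\core(G)$
$$
x*y = xy\mo x = (x_i y_i\mo x_i)_{i\in I} = (x_i * y_i)_{i\in I},
$$
where the last expression is exactly the operation of the direct product quandle $\prod_{i\in I}\core(G_i)$. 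Hence the identity map on the common underlying set is a quandle isomorphism.

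The main steps, in order: (1) recall that for an algebra presented as a reduct of another, term operations are preserved under direct products, so it suffices to check that the core term $xy\mo x$ is evaluated componentwise in the product group — this is just the definition of the direct product of groups; (2) conclude that $\core$, being defined by a fixed group term, sends the product group to the product quandle on the nose; (3) note that each factor $\core(G_i)$ is a core quandle by definition, and a (set-indexed) direct product of quandles is again a quandle, so $\core(G)$ exhibits $\prod_i \core(G_i)$ as a core quandle. This covers both finite and arbitrary products.

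I do not anticipate a genuine obstacle here; the only thing to be slightly careful about is phrasing, namely to make explicit that we are using the reduct structure (every quandle term built from the group operations, in particular $x*y = xy\mo x$, distributes over direct products because the group operations do). One could alternatively phrase the whole argument in one line by invoking the general universal-algebraic fact that if $\mathcal{K}$ is a class of algebras closed under products and $\sigma$ is a similarity-type reduct functor given by term operations, then $\sigma(\mathcal{K})$ is closed under products; but for a self-contained paper I would prefer to spell out the componentwise computation of $*$ as above, since it is short and transparent.
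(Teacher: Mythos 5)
Your proposal is correct and matches the paper's argument: the paper's proof consists precisely of the observation that $\prod_{i\in I}\core(G_i)=\core\bigl(\prod_{i\in I}G_i\bigr)$, which you verify by the componentwise computation of $x*y=xy\mo x$. You simply spell out the details the paper leaves as "easy to check."
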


\begin{proof}
    It is easy to check that $\prod_{i\in I} \core(G_i)=\core (\prod_{i\in I} G_i)$ for an arbitrary set of groups $\setof{G_i}{i\in I}$.
    %
\end{proof}

\begin{example}
\text{}
\begin{enumerate}
\item    The class of core quandles is not closed under subalgebras. In fact, the quandle $\core(\A_4)$ contains a copy of a projection quandle of size $3$, which is not the core quandle of $\Z_3$ (the only group of size 3).

  \item  The class of core quandles is not closed under quotients (hence, under homomorphic images). Indeed, the core quandle $Q=\core(\A_4)$ has a congruence $\theta$ such that $Q/\theta$ is the connected quandle {\tt SmallQuandle(6,1)} of the RIG database.
%
%
    None of the quandles $\core(G)$ is isomorphic to $Q/\theta$, for $G$ running over all the groups of order $6$.
    \end{enumerate}
    \end{example}
\section{Homogeneous representations} \label{sec: hom rep}
These canonical actions of $G$ on itself and the action by conjugation of $G$ on $G$ are actions by automorphisms of $\core(G)$. Indeed 
     \begin{align*}
    xy*xz=xyz^{-1} x^{-1} xz=x(yz^{-1}y)=x(y*z),\\
    yx*zx=yxx^{-1}z^{-1} yx=(yz^{-1}y)x=(y*z)x,
     \end{align*}
     for every $x,y,z\in Q$.
Therefore core quandles are homogeneous. In this section we provide a standard homogeneous representation obtained by using the canonical left and right actions of the underlying group.

\begin{lemma}
Let $Q=\core(G)$ be a quandle, and let $K=\setof{(x,x\mo )}{x\in \zen(G)}$. Then
$$\varphi:G\times G^{op}\to \aut{(Q)}\quad (x,y)\mapsto \lambda_x\rho_y$$
is a group homomorphism and $\ker{\varphi}=K\cong\zen(G)$. Consequently, $(G\times G^{op})/K\cong\lambda(G)\rho(G)$.
\end{lemma}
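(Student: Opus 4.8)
The plan is to verify directly that $\varphi$ is a homomorphism, then identify its kernel, and finally read off the quotient statement from the first isomorphism theorem. For the homomorphism property, I would compute $\varphi(x_1,y_1)\varphi(x_2,y_2) = \lambda_{x_1}\rho_{y_1}\lambda_{x_2}\rho_{y_2}$ and use the commuting relation $[\lambda_g,\rho_h]=1$ recalled in the Notation section to rearrange this as $\lambda_{x_1}\lambda_{x_2}\rho_{y_1}\rho_{y_2} = \lambda_{x_1x_2}\rho_{y_2y_1}$. Since the operation in $G^{op}$ is $y_1\cdot^{op}y_2 = y_2y_1$, this is exactly $\varphi\bigl((x_1,y_1)(x_2,y_2)\bigr)$, so $\varphi$ is a homomorphism. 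The codomain is legitimate because the opening computation of this section shows each $\lambda_x$ and each $\rho_y$ is an automorphism of $Q$, hence so is every composite $\lambda_x\rho_y$.

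Next I would compute the kernel. We have $(x,y)\in\ker\varphi$ iff $\lambda_x\rho_y = \id$, i.e. $xzy = z$ for all $z\in G$. Taking $z=1$ gives $xy=1$, so $y=x\mo$; substituting back, the condition becomes $xzx\mo = z$ for all $z$, i.e. $x\in\zen(G)$. Thus $\ker\varphi = \setof{(x,x\mo)}{x\in\zen(G)} = K$, and the map $x\mapsto (x,x\mo)$ is readily seen to be an injective homomorphism $\zen(G)\to G\times G^{op}$ (using that $\iota:x\mapsto x\mo$ is an isomorphism $G\to G^{op}$), so $K\cong\zen(G)$. The image of $\varphi$ is $\setof{\lambda_x\rho_y}{x,y\in G}$, which is precisely the subgroup $\lambda(G)\rho(G)$ (it is already a subgroup since $\lambda(G)$ and $\rho(G)$ commute elementwise). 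The first isomorphism theorem then yields $(G\times G^{op})/K\cong\lambda(G)\rho(G)$.

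I do not anticipate a genuine obstacle here; the only point requiring a little care is the bookkeeping of the opposite-group multiplication when checking that $\varphi$ respects products — it is essential that $\rho$ is defined on $G^{op}$ rather than $G$, precisely so that the order reversal from composing right translations matches the order reversal built into $G^{op}$. A second minor point worth stating explicitly is why $\lambda(G)\rho(G)$ is a subgroup and equals the image: since $\lambda_g$ and $\rho_h$ commute for all $g,h$, the product set $\lambda(G)\rho(G)$ is closed under multiplication and inversion, hence a subgroup, and it visibly coincides with $\im\varphi$. Everything else is the routine application of the first isomorphism theorem.
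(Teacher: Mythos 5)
Your proposal is correct and follows essentially the same route as the paper: verify the homomorphism property directly via the elementwise commutation of $\lambda(G)$ and $\rho(G)$ (with the opposite-group multiplication absorbing the order reversal of right translations), compute the kernel by evaluating at $z=1$ and substituting back, and conclude with the first isomorphism theorem. The extra remarks you include (that $\lambda_x\rho_y\in\aut(Q)$, that $K\cong\zen(G)$ via $x\mapsto(x,x\mo)$, and that $\lambda(G)\rho(G)=\im\varphi$ is a subgroup) are points the paper leaves implicit, so nothing is missing.
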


\begin{proof}
Let $x,y,z,t\in G$. Then
$$\varphi(x,y)\varphi(z,t)=\lambda_x\rho_y\lambda_z\rho_t=\lambda_{x}\lambda_{z}\rho_y\rho_t=\lambda_{xz}\rho_{ty}=\varphi(xz,ty)=\varphi((x,y)(z,t)).$$
If $(x,y)\in \ker{\varphi}$ then $\lambda_x \rho_y(z)=xzy=z$ for every $z\in G$. If $z=1$ we have that $y=x\mo $. Then $xzx\mo =z$ for every $z\in G$, thus $x\in \zen(G)$.
\end{proof}

We can actually provide two different homogeneous representations that will be used later on. Let $G$ be a group, $t\in\mathbb{N}$, and $\theta\in \aut(G)$. We denote by $\theta_t$ the map 
$$G^t\to G^t,\quad (x_1,\ldots,x_t)\mapsto (\theta(x_t),x_1,\ldots, x_{t-1})$$
for every $x_1,\ldots, x_t\in G$. Note that $\theta_t$ is an automorphism of $G^t$. 

\begin{proposition}\label{core homo}
    Let $Q=\core(G)$. Then:
    \begin{itemize}

    \item[(i)] $Q\cong \mathcal{Q}(G\times G^{op},H,\hat{\iota})$, where $H=\setof{(x,x\mo )}{x\in G}$.
    \item[(ii)] $Q\cong \mathcal{Q}(G^2 ,D,1_2)$, where $D=\setof{(x,x)}{x\in G}$ and $1_2(x,y)=(y,x)$ for every $x,y\in G$.
    \end{itemize}
\end{proposition}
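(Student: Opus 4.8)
The plan is to establish (i) by writing the isomorphism down by hand, and then to obtain (ii) from (i) by relabelling the acting group. Throughout, $\hat\iota$ denotes the automorphism of $G\times G^{op}$ obtained by transporting conjugation by $L_1$ through the homomorphism $\varphi$ of the preceding lemma; concretely $\hat\iota(x,y)=(y\mo,x\mo)$. One checks that $\hat\iota$ is indeed an automorphism of $G\times G^{op}$ (in fact an involution) and that $\fix(\hat\iota)=H$, so $H\le\fix(\hat\iota)$ and the coset quandle $\mathcal Q(G\times G^{op},H,\hat\iota)$ is well defined.

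For (i), observe that $\varphi(x,y)(1)=\lambda_x\rho_y(1)=xy$, so the action of $G\times G^{op}$ on $Q=\core(G)$ through $\varphi$ is transitive and the stabilizer of $1\in Q$ is exactly $H=\setof{(x,x\mo)}{x\in G}$. Orbit-stabilizer then shows that
$$\Phi\colon \mathcal Q(G\times G^{op},H,\hat\iota)\longrightarrow \core(G),\qquad (x,y)H\longmapsto xy,$$
is a well-defined bijection, so it remains only to check that $\Phi$ preserves the operations. Expanding the coset-quandle product with the opposite-group convention gives $(x,y)H*(z,t)H=(x,y)\,\hat\iota\big((x,y)\mo(z,t)\big)\,H=(xyt\mo,\,z\mo xy)H$, whence $\Phi\big((x,y)H*(z,t)H\big)=xyt\mo z\mo xy$; on the core side, $\Phi((x,y)H)*\Phi((z,t)H)=(xy)*(zt)=(xy)(zt)\mo(xy)=xyt\mo z\mo xy$. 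The two coincide, so $\Phi$ is a quandle isomorphism. (Alternatively one could invoke Proposition~\ref{homogeneous_rep} with $G'=\lambda(G)\rho(G)$ and basepoint $1$, using that $\widehat{L_1}=\iota$ normalizes $\lambda(G)\rho(G)$ and corresponds to $\hat\iota$ under $\varphi$, together with the fact that a coset quandle is unchanged if the acting group is replaced by its quotient by a central subgroup contained in the point stabilizer — here that subgroup is the kernel $K\le H$. The explicit $\Phi$ makes this detour unnecessary.)

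For (ii), consider $\alpha=\mathrm{id}\times\iota\colon G\times G^{op}\to G^2$, $(x,y)\mapsto(x,y\mo)$. Using that $\iota$ reverses multiplication, $\alpha$ is a group isomorphism; moreover $\alpha(H)=D=\setof{(x,x)}{x\in G}$ and $\alpha\,\hat\iota\,\alpha\mo=1_2$, the factor swap. Since a coset quandle $\mathcal Q(\Gamma,H,f)$ depends on the triple $(\Gamma,H,f)$ only up to isomorphism, part (i) yields $\mathcal Q(G^2,D,1_2)\cong\mathcal Q(G\times G^{op},H,\hat\iota)\cong\core(G)$. (Equivalently, (ii) can be proved directly in the same way as (i), via $\psi\colon G^2\to\aut(\core(G))$, $(x,y)\mapsto\lambda_x\rho_y\mo$, whose $1$-orbit is all of $G$ with stabilizer $D$ and for which $\widehat{L_1}$ acts as $1_2$.)

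There is no genuine obstacle here: the argument is a verification, and the only thing requiring care is the bookkeeping inside $G\times G^{op}$ — computing inverses and products with the opposite-group operation, identifying $\hat\iota$ correctly, and checking the well-definedness hypothesis of the coset construction, which in this case is tight since $\fix(\hat\iota)=H$ exactly.
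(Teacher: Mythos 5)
Your proof is correct, and for part (i) it takes a slightly different (more self-contained) route than the paper. The paper identifies $G\times G^{op}$ with the subgroup $\lambda(G)\rho(G)\leq\aut(Q)$, checks transitivity and $\hat\iota$-invariance, and then simply invokes Proposition \ref{homogeneous_rep} with basepoint $1$, computing the stabilizer to be $H=\setof{(x,x\mo)}{x\in G}$; note that this identification is only up to the kernel $K\cong\zen(G)$ of $\varphi$, which is why the paper relies on the remark that \cite[Proposition 3.5]{hsv} holds under slightly weaker hypotheses (a transitive, not necessarily faithful, action by automorphisms). You instead write the isomorphism $\Phi\colon (x,y)H\mapsto xy$ explicitly and verify by direct computation that it intertwines the coset-quandle operation with the core operation; this buys independence from Proposition \ref{homogeneous_rep} and sidesteps the faithfulness subtlety entirely (which your parenthetical correctly addresses, since $K\leq H$ is central and invariant under $\hat\iota$), at the cost of a short computation in $G\times G^{op}$ that you carry out correctly, including the identification $\fix(\hat\iota)=H$. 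For part (ii) your argument is the same as the paper's: the isomorphism $(x,y)\mapsto(x,y\mo)$ from $G\times G^{op}$ to $G^2$ carries $H$ to $D$ and conjugates $\hat\iota$ to the coordinate swap $1_2$ (the paper's ``$s(G)$'' there is evidently a typo for $G^2$).
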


\begin{proof}
(i)    Identify the group $G\times G^{op}$ with $\rho(G)\lambda(G)$. The group $G\times G^{op}$ is transitive on $Q$ and it is $\hat{\iota}$-invariant, since for all $x,y\in G$ we have $\iota\lambda_x\rho_y\iota=\rho_{x\mo}\lambda_{y\mo}=\lambda_{y\mo}\rho_{x\mo}$. Thus, according to Proposition \ref{homogeneous_rep} we have that $Q\cong \mathcal{Q}(G\times G^{op},H, \hat{\iota})$, where $H$ is the stabilizer of $1$. In particular we have that $(x,y)\in (G\times G^{op})_1$ if and  only if $xy=1$, that is $H=\setof{(x,x\mo )}{x\in G}$. 


(ii) The map $(x,y)\mapsto (x,y\mo )$ from $G\times G^{op}$ to $s(G)$ provides and isomorphism of quandles between $\mathcal{Q}(G\times G^{op},H,\iota)$ and $\mathcal{Q}(G^2,D,1_2)$.
\end{proof}

The first consequence of the homogeneous representation we obtained above is the following characterization of core quandles among involutory quandles in general.

\begin{proposition}\label{characterization of core}
    Let $Q$ be an involutory quandle and $x_0\in Q$. The following conditions are equivalent.
    \begin{enumerate}
        \item $Q\cong \core(G)$.
        \item There exist regular actions $\lambda\colon G\to\aut(Q)$ and $\rho\colon G^{op}\to \aut(Q)$  such that $[\lambda_g,\rho_h]=1$, $L_{x_0} \lambda_g L_{x_0}=\rho_g\mo $ and $\lambda_g(x_0)=\rho_h(x_0)$ implies $h=g$ for all $g,h\in G$.
    \end{enumerate}
\end{proposition}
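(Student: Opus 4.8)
For the implication $(1)\Rightarrow(2)$: since $\core(G)$ is homogeneous, and in fact $\lambda(G)$ acts transitively on it, we may choose an isomorphism $\phi\colon Q\to\core(G)$ with $\phi(x_0)=1$. Put $\lambda'_g=\phi\mo\lambda_g\phi$ and $\rho'_g=\phi\mo\rho_g\phi$, where $\lambda$ and $\rho$ are the canonical actions of $G$ and $G^{op}$ on $G$; by the computations opening this section these lie in $\aut(\core(G))$, so $\lambda',\rho'$ take values in $\aut(Q)$, and they are regular and satisfy $[\lambda'_g,\rho'_h]=1$ because $\lambda,\rho$ do. Conjugating by $\phi$ turns $L_{x_0}$ into $L_1=\iota$ (as $L_1(y)=y\mo$ in $\core(G)$), so $L_{x_0}\lambda'_gL_{x_0}=\phi\mo(\iota\lambda_g\iota)\phi=\phi\mo\rho_{g\mo}\phi=(\rho'_g)\mo$; and $\lambda'_g(x_0)=\rho'_h(x_0)$ reads, after applying $\phi$, as $g\cdot1=1\cdot h$, i.e.\ $g=h$. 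This is precisely~(2).

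For $(2)\Rightarrow(1)$: regularity of $\lambda$ makes $\psi\colon G\to Q$, $g\mapsto\lambda_g(x_0)$, a bijection; transport the group structure of $G$ along $\psi$ to obtain a group $(Q,\cdot)\cong G$ with identity $x_0$. It is enough to show that $*$ equals the core operation of $(Q,\cdot)$, i.e.\ that $\psi(g)*\psi(h)=\psi(gh\mo g)$ for all $g,h$. The crucial point is the identity $\rho_k(x_0)=\lambda_k(x_0)$ for every $k\in G$. To prove it, combine $L_{x_0}^2=1$ (as $Q$ is involutory) with $L_{x_0}\lambda_gL_{x_0}=\rho_g\mo$ to get $\rho_g=L_{x_0}\lambda_{g\mo}L_{x_0}$, hence $\rho_g(x_0)=L_{x_0}(\psi(g\mo))$; writing $\mu=\psi\mo L_{x_0}\psi$, a bijection of $G$ since $L_{x_0}$ is an automorphism of $Q$, this says $\rho_g(x_0)=\psi(\mu(g\mo))$. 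The third clause of~(2) now reads ``$g=\mu(h\mo)\Rightarrow g=h$'', and taking $g=\mu(h\mo)$ forces $\mu(h\mo)=h$ for all $h$, i.e.\ $\mu=\iota$; therefore $\rho_k(x_0)=\psi((k\mo)\mo)=\psi(k)=\lambda_k(x_0)$.

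The rest is bookkeeping. Since $\lambda_g\in\aut(Q)$, conjugation of left translations gives $L_{\psi(g)}=\lambda_gL_{x_0}\lambda_g\mo$. Applying this to $\psi(h)=\lambda_h(x_0)$ and using, in order, $L_{x_0}\lambda_k=\rho_k\mo L_{x_0}$ (from $L_{x_0}\lambda_kL_{x_0}=\rho_k\mo$ together with $L_{x_0}^2=1$), idempotency $L_{x_0}(x_0)=x_0$, the identity $\rho_k\mo=\rho_{k\mo}$ (because $\rho$ is a homomorphism out of $G^{op}$), and the identity $\rho_k(x_0)=\lambda_k(x_0)$ just established, we obtain $\psi(g)*\psi(h)=\lambda_g\rho_{h\mo g}(x_0)=\lambda_g\lambda_{h\mo g}(x_0)=\lambda_{gh\mo g}(x_0)=\psi(gh\mo g)$. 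Thus $*$ is the core operation of $(Q,\cdot)$, and $Q\cong\core(Q,\cdot)\cong\core(G)$.

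I expect the main obstacle to be exactly the identity $\rho_k(x_0)=\lambda_k(x_0)$: one has to recognize that the otherwise opaque third condition of~(2) is precisely what pins down the base involution $L_{x_0}$ as group inversion on $Q\cong G$, after which only the routine care with inverses and with the fact that $\rho$ is an action of $G^{op}$ rather than of $G$ remains. A minor point not to overlook in $(1)\Rightarrow(2)$ is moving the base point to $1$ first, since $L_{x_0}\lambda_gL_{x_0}=\rho_g\mo$ fails at a non-central base point.
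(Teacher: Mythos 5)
Your proof is correct, but it takes a genuinely different route from the paper in the direction (ii)$\Rightarrow$(i). The paper stays inside its homogeneous-representation framework: it forms the coset quandle $Q'=\mathcal{Q}(\lambda(G)\rho(G^{op}),H,f)$ via Proposition \ref{homogeneous_rep} (the hypotheses guarantee that $\lambda(G)\rho(G^{op})$ is transitive and $\widehat{L_{x_0}}$-invariant, and the third clause identifies the stabilizer $H$), then transfers to $\mathcal{Q}(G\times G^{op},K,f)$ and invokes Proposition \ref{core homo}. You instead transport the group structure along the orbit bijection $\psi\colon g\mapsto\lambda_g(x_0)$ and verify directly that $\psi(g)*\psi(h)=\psi(gh\mo g)$; the decisive step, correctly isolated, is that the third clause together with $L_{x_0}\lambda_gL_{x_0}=\rho_g\mo$, $L_{x_0}^2=1$ and idempotency forces $\rho_k(x_0)=\lambda_k(x_0)$ (equivalently, that $L_{x_0}$ is inversion under $\psi$), after which $L_{\psi(g)}=\lambda_gL_{x_0}\lambda_g\mo$ gives the core identity. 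Your argument is more elementary and self-contained, and it exposes that for this implication the commutation $[\lambda_g,\rho_h]=1$ and the regularity of $\rho$ are never used (they are only needed so that the canonical actions witness (ii) in the converse direction); the paper's route, by contrast, makes the statement transparent as the converse of Proposition \ref{core homo} at the cost of routing through the coset machinery. Your treatment of (i)$\Rightarrow$(ii) is also slightly more careful than the paper's, since you conjugate by an isomorphism sending $x_0$ to $1$ (using homogeneity of $\core(G)$) rather than silently assuming $x_0=1$.
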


\begin{proof}
   (i) $\Rightarrow$ (ii) If $Q\cong\core(G)$, it is enough to take the canonical left and right actions and $x_0=1$.

   (ii) $\Rightarrow$ (i) Conversely, define $Q'=\mathcal{Q}(\lambda(G)\times \rho(G^{op}),H,f)$ where $H=\setof{(\lambda_g,\rho_g^{-1})}{g\in G }$ and $f(\lambda_g,\rho_h)=L_{x_0}(\lambda_g,\rho_h)L_{x_0}=(\lambda_h\mo ,\rho_g\mo )$ for every $g,h\in G$. The map $(\lambda_g,\rho_h)H\mapsto \lambda_g \rho_h(x_0)$ is an isomorphism of quandle. On the other hand the map, let $K=\setof{(g,g\mo )}{g\in G}\leq G\times G^{op}$ and define the map $(\lambda_g,\rho_h)H\mapsto (g,h)K\in (G\times G^{op})/K$. Such map provides and isomorphism between $Q'$ and $Q''=\mathcal{Q}(G\times G^{op},K,f)$ where $f(g, h)=(h\mo ,g\mo )$ for every $g,h\in G$. According to Proposition \ref{core homo} $Q''$ is isomorphic to $\core(G)$. 
\end{proof}

We can also use the homogeneous representation obtained in Proposition \ref{core homo} to characterize simple and primitive core quandles. Recall that a quandle is \emph{simple} if the congruence lattice of $Q$ has only two elements, and it is \emph{primitive} if the natural action of $\lmlt(Q)$ on $Q$ is primitive.

In \cite{Primitive} we investigated primitive and simple quandles and we denoted the quandle $\mathcal{Q}(G^t,Fix(\theta_t),\theta_t)$ just by $(G,t,\theta)$. For our purposes note that the quandle $(G,2,1)$ is just $\mathcal{Q}(G^2,D,1_2)$ where $D=\setof{(x,x)}{x\in G}$ as defined in Proposition \ref{core homo}. 

\begin{theorem}
Let $Q=\core(G)$. The following are equivalent.

\begin{itemize}
\item[(i)] $Q$ is primitive.

\item[(ii)] $Q$ is simple.

\item[(iii)] $G$ is simple.
\end{itemize}
\end{theorem}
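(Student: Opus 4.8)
The plan is to prove the implications (iii) $\Rightarrow$ (i) $\Rightarrow$ (ii) $\Rightarrow$ (iii), using the homogeneous representation $Q\cong\mathcal{Q}(G^2,D,1_2)=(G,2,1)$ from Proposition \ref{core homo}(ii). The implication (i) $\Rightarrow$ (ii) is essentially general nonsense: for any quandle, if the action of $\lmlt(Q)$ is primitive then in particular it is transitive, so $Q$ is connected, and any congruence of $Q$ is a system of blocks for $\lmlt(Q)$ (since $\lmlt(Q)\leq\aut(Q)$ preserves congruences); primitivity forces the only $\lmlt(Q)$-block systems to be trivial, hence the only congruences are the trivial ones and $Q$ is simple. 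I would state this briefly, perhaps citing the relevant fact from \cite{Primitive} if it is available there.

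For (iii) $\Rightarrow$ (i), the strategy is to compute $\lmlt(Q)$ explicitly in the representation $(G,2,1)$. Using $\varphi\colon G\times G^{op}\to\aut(Q)$, the left translations are conjugates of $L_{(1,1)}$, which acts on $G^2$ essentially by the flip $1_2$ composed with translations; concretely $\lmlt(Q)$ should be generated by $\lambda(G)\rho(G)$ together with the flip, i.e. it is a quotient of $(G\times G^{op})\rtimes\Z_2$. I would identify the stabilizer of the base point $(1,1)$ (which corresponds to $D$ together with the flip) and then show directly that when $G$ is simple there is no subgroup strictly between this stabilizer and the whole group; equivalently, that the only $\lmlt(Q)$-invariant partitions of $G^2$ refining the point partition are trivial. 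The key computation is that a block containing $(1,1)$ must, by invariance under $\lambda(G)\rho(G)$ and the flip, be closed under operations that generate either a single point or all of $G^2$ when $G$ is simple — here the diagonal $D$ and its cosets play the role of the "intermediate" candidate, and one rules these out because they are not flip-invariant in a block-compatible way unless $G$ is trivial, while any larger invariant sub-relation generates a normal subgroup of $G$.

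For (ii) $\Rightarrow$ (iii), I would argue contrapositively: if $N\norm G$ is a nontrivial proper normal subgroup, then by Remark \ref{not iso iff}(ii) the canonical map $\core(G)\to\core(G/N)$ is an onto quandle homomorphism, giving a congruence $\theta$ on $Q$; this $\theta$ is nontrivial because $N\neq 1$, and proper because $N\neq G$, so $Q$ is not simple. This direction is the easy one.

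The main obstacle will be the block-analysis in (iii) $\Rightarrow$ (i): pinning down exactly which partitions of $G^2$ are invariant under the group generated by $\lambda(G)\rho(G)$ and the flip $1_2$, and showing that the presence of the flip is what kills the diagonal cosets as a block system (so that no intermediate system survives). I expect this is where the core-specific structure is used, and I would lean on the analysis of $(G,t,\theta)$-quandles already carried out in \cite{Primitive}, where the congruence lattice of $\mathcal{Q}(G^t,\fix(\theta_t),\theta_t)$ is presumably described in terms of $\theta$-invariant normal subgroups of $G$ and partitions of the index set $\{1,\dots,t\}$; for $t=2$, $\theta=1$ this should collapse to exactly: trivial partitions of $\{1,2\}$ give the trivial congruences, and the normal-subgroup congruences are nontrivial iff $G$ is not simple — with the flip symmetry ensuring the "coordinate-wise" congruences coming from a single coordinate are not invariant. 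If that citation suffices, the proof is short; if not, the honest work is the direct verification sketched above.
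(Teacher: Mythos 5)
Your proposal is correct and follows essentially the same route as the paper: (i)~$\Rightarrow$~(ii) as a general fact about block systems, (ii)~$\Rightarrow$~(iii) via the congruence induced by a proper normal subgroup (Remark \ref{not iso iff}), and (iii)~$\Rightarrow$~(i) by identifying $\core(G)$ with $(G,2,1)=\mathcal{Q}(G^2,D,1_2)$ through Proposition \ref{core homo}(ii) and invoking \cite[Theorem 4.10]{Primitive}. The citation does suffice, so the direct block-analysis you sketch as a fallback is not needed.
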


\begin{proof}
(i) $\Rightarrow$ (ii) This implication is true in general.

(ii) $\Rightarrow$ (iii) Every normal subgroup of $G$ would give rise to a quandle congruence of $Q$ (see Remark \ref{not iso iff}), so if $\core(G)$ is simple, then also $G$ must be simple. 

(iii) $\Rightarrow$ (i) Let $G$ be simple. Then $\core(G)\cong \mathcal{Q}(G^2,D,1_2)$ where $D=\setof{(x,x)}{x\in G}$ (see Proposition \ref{core homo}). Using the notation introduced in \cite{Primitive} then $\core(G)\cong (G,2,1)$, so according to \cite[Theorem 4.10]{Primitive} $Q$ is primitive.
\end{proof}






\section{The displacement group of core quandles}\label{sec dis}
Let $Q=\core(G)$. Let us first focus on the action of $\lmlt(Q)$ and $\dis(Q)$. Note that $L_1(x)=\iota(x)=x\mo $ and that $L_x L_1(y)=xyx$ for every $x,y\in G$. We denote the map $L_x L_1$ by $\delta_x$ for every $x\in G$.

It is known that the displacement group of a quandle $Q$ is generated by $\setof{L_x L_{x_0}\mo }{x\in Q}$, therefore we can write $\lmlt(Q)=\dis(Q)\langle L_{x_0}\rangle$ for every $x_0\in Q$ \cite[Lemma 1.4]{semimedial}. With this property, if $Q=\core(Q)$ we have that
%
%
   \begin{align*}
   \dis(Q)&=\langle \delta_x\,\colon x\in Q\rangle,\\
   \lmlt(Q)&=\left\langle\iota,\delta_z\, \colon\, z\in G\right\rangle.
   \end{align*}


\begin{lemma} \label{dis_stab_1}
    Let $Q=\core(G)$. The following are equivalent 
    \begin{itemize}
\item[(i)]  $\delta_{x_1}\cdots\delta_{x_t}\in\dis(Q)_1$.
\item[(ii)] $x_1\cdots x_t=(x_t\cdots x_1)\mo$.
    \end{itemize}
    In particular, $\dis(Q)_1=\dis(Q)\cap \lmlt(G)$.
\end{lemma}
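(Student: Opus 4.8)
The statement rests on a single explicit formula for products of the generators of $\dis(Q)$. Recall that on the underlying set $G$ of $Q=\core(G)$ the map $\delta_x=L_xL_1$ acts by $\delta_x\colon z\mapsto xzx$. I would first prove by induction on $t\geq 1$ that
\[
\delta_{x_1}\cdots\delta_{x_t}\colon\ z\ \longmapsto\ (x_1\cdots x_t)\,z\,(x_t\cdots x_1)\qquad(z\in G).
\]
The base case $t=1$ is precisely $\delta_{x_1}(z)=x_1zx_1$, and the inductive step is immediate: applying $\delta_{x_1}$ to $(x_2\cdots x_t)\,z\,(x_t\cdots x_2)$ produces $(x_1x_2\cdots x_t)\,z\,(x_t\cdots x_2x_1)$.

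Granting the formula, $\delta_{x_1}\cdots\delta_{x_t}$ lies in $\dis(Q)_1$, i.e.\ fixes $1\in Q$, if and only if $(x_1\cdots x_t)(x_t\cdots x_1)=1$; transposing $x_t\cdots x_1$ to the other side, this is exactly condition (ii). Hence (i) and (ii) are equivalent.

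For the final identity I would first note that $\delta_x$, being the bijection $z\mapsto xzx$ of $G$, has inverse $z\mapsto x\mo zx\mo$, i.e.\ $\delta_x\mo=\delta_{x\mo}$; consequently every element of $\dis(Q)=\langle\delta_x\colon x\in G\rangle$ is a positive word $\delta_{x_1}\cdots\delta_{x_t}$. If such a word belongs to $\dis(Q)_1$, then by the displayed formula and (ii) it acts as $z\mapsto gzg\mo$ with $g=x_1\cdots x_t$, hence equals the inner automorphism $\widehat{g}$; so $\dis(Q)_1\subseteq\lmlt(G)$, where $\lmlt(G)=\langle\widehat{g}\colon g\in G\rangle=\lmlt(\conj(G))$ is viewed inside $\sym(G)$. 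Conversely each $\widehat{g}$ fixes $1\in\core(G)$, so $\dis(Q)\cap\lmlt(G)\subseteq\dis(Q)_1$, which together with $\dis(Q)_1\subseteq\dis(Q)\cap\lmlt(G)$ gives the equality $\dis(Q)_1=\dis(Q)\cap\lmlt(G)$.

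\textbf{Main obstacle.} There is essentially none: the content is the elementary product formula above. The only step needing a moment's care is the reduction of an arbitrary element of $\dis(Q)$ to a positive word in the $\delta_x$, which is handled by the remark $\delta_x\mo=\delta_{x\mo}$ and keeps the subsequent bookkeeping clean.
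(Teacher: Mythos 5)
Your proof is correct and follows essentially the same route as the paper: the explicit formula $\delta_{x_1}\cdots\delta_{x_t}(z)=(x_1\cdots x_t)z(x_t\cdots x_1)$, evaluation at $1$ for the equivalence of (i) and (ii), and the observation that such a stabilizing word is the inner automorphism $\widehat{x_1\cdots x_t}$ while inner automorphisms fix $1$, giving $\dis(Q)_1=\dis(Q)\cap\lmlt(G)$. You are merely more explicit than the paper in two useful bookkeeping points (the induction and the remark $\delta_x\mo=\delta_{x\mo}$ reducing to positive words), which the paper leaves implicit.
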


\begin{proof}
Note that $\delta_{x_1}\cdots\delta_{x_t}(1)=
x_1\cdots x_t x_t \cdots x_1=1$ if and only if $x_t\cdots x_1=(x_1\cdots x_1)^{-1}$. Thus (i) and (ii) are equivalent.

So, if $\delta_{x_1}\cdots\delta_{x_t}\in \dis(Q)_1$ then $\delta_{x_1}\cdots\delta_{x_t}\in \lmlt(G)$. On the other hand, $\lmlt(G)_1\leq \aut{Q}_1$. Therefore $\dis(Q)_1=\dis(Q)\cap \lmlt(G)$.
\end{proof}

\begin{proposition}
    Let $Q=\core(G)$ be a non-projection quandle. Then $\lmlt(Q)\cong\dis(Q)\rtimes\Z_2$.
\end{proposition}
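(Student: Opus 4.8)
The plan is to realize $\lmlt(Q)$ as an internal semidirect product with the two-element subgroup generated by $\iota=L_1\colon x\mapsto x\mo$. From the paragraph preceding Lemma~\ref{dis_stab_1} we already have $\lmlt(Q)=\dis(Q)\langle L_{x_0}\rangle$ for every $x_0\in Q$, so choosing $x_0=1$ gives $\lmlt(Q)=\dis(Q)\langle\iota\rangle$; moreover $\iota^2=L_1^2=1$ since $\core(G)$ is involutory, and $\dis(Q)\norm\lmlt(Q)$ is standard. Thus the proposition reduces to two claims: $\iota\neq\id$, and $\iota\notin\dis(Q)$.

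The first claim is immediate, since $\iota=\id$ would mean $x=x\mo$ for all $x\in G$, forcing $G$ to be elementary abelian of exponent $2$ and hence (by the Example in Section~\ref{sec: preliminaries}) making $\core(G)$ a projection quandle, contrary to hypothesis. For the second claim — which is the crux — I would argue by contradiction. If $\iota\in\dis(Q)$ then $\iota=\delta_{x_1}\cdots\delta_{x_t}$ for some $x_1,\dots,x_t\in G$, because $\dis(Q)=\langle\delta_x:x\in G\rangle$ and $\delta_x\mo=\delta_{x\mo}$. A short induction from $\delta_x(y)=xyx$ shows $\delta_{x_1}\cdots\delta_{x_t}(y)=p\,y\,q$ for all $y$, where $p=x_1\cdots x_t$ and $q=x_t\cdots x_1$ (this is the computation underlying Lemma~\ref{dis_stab_1}). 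Setting $pyq=y\mo$ and evaluating at $y=1$ gives $q=p\mo$; then $pyp\mo=y\mo$ for all $y$, evaluating at $y=p$ gives $p^2=1$, after which $y\mapsto pyp$ is at once an automorphism of $G$ and the inversion map, so $G$ is abelian, and finally $y=pyp=p^2y=y\mo$ for all $y$. So again $G$ is elementary abelian of exponent $2$ and $\core(G)$ is a projection quandle — the desired contradiction.

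Having established $\iota\neq\id$ and $\iota\notin\dis(Q)$, we get $\langle\iota\rangle\cong\Z_2$ and $\dis(Q)\cap\langle\iota\rangle=1$; together with $\lmlt(Q)=\dis(Q)\langle\iota\rangle$ and normality of $\dis(Q)$ this yields $\lmlt(Q)=\dis(Q)\rtimes\langle\iota\rangle\cong\dis(Q)\rtimes\Z_2$. The only real obstacle is the verification that $\iota\notin\dis(Q)$; the rest is assembling facts already in hand.
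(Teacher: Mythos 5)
Your proof is correct and follows essentially the same route as the paper: write $\lmlt(Q)=\dis(Q)\langle\iota\rangle$, then show by contradiction that $\iota\in\dis(Q)$ would force inversion to be conjugation by some $p\in G$, hence $G$ abelian and then elementary abelian of exponent $2$, i.e.\ $Q$ projection. The only cosmetic differences are that you inline the computation that the paper delegates to Lemma~\ref{dis_stab_1}, and you explicitly verify $\iota\neq\id$ (so that $\langle\iota\rangle\cong\Z_2$), a point the paper leaves implicit.
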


\begin{proof}
    Let $I=\langle\iota\rangle\cong\Z_2$. We know that $\dis(Q)$ is normal in $\lmlt(Q)$ and that $\dis(Q)I=\lmlt(Q)$. Assume that $I\cap \dis(Q)\neq 1$, so we must necessarily have that $I\subseteq \dis(Q)$. In particular $\iota\in \dis(Q)_1$, and so according to Lemma \ref{dis_stab_1}, the map $\iota:x\mapsto x^{-1}$ is an inner automorphism of $G$. Therefore $G$ is abelian. Hence, $\lmlt(G)=1$ and so $\iota(x)=x^{-1}=x$ for every $x\in G$. Therefore $G$ is a $2$-elementary abelian group and $Q$ is projection. 
    %
%

    Thus, if $Q$ is not projection then $\dis(Q)\cap I=1$ and so $\lmlt(Q)=\dis(Q)\rtimes I$. 
\end{proof}

Let us now investigate the relationship between the group $G$ and the displacement group of the corresponding core quandle.

\begin{lemma}\label{lemma on action of dis}
Let $Q=\core(G)$. The following are equivalent 
\begin{itemize}
\item[(i)]  $\delta_{x_1}\cdots \delta_{x_t}=1$. 
\item[(ii)]  $x_1\cdots x_t=(x_t\cdots x_1)\mo \in \zen(G)$.
\end{itemize}
\end{lemma}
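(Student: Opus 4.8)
The plan is to analyze when the product $\delta_{x_1}\cdots\delta_{x_t}$ is not merely the identity on the base point $1$ (which is Lemma \ref{dis_stab_1}) but the identity on all of $G$. First I would compute the action of $\delta_{x_1}\cdots\delta_{x_t}$ on an arbitrary element $g\in G$. Since $\delta_x(y)=xyx$, an easy induction gives
$$\delta_{x_1}\cdots\delta_{x_t}(g)=(x_1\cdots x_t)\+g\+(x_t\cdots x_1).$$
Write $a=x_1\cdots x_t$ and $b=x_t\cdots x_1$. Then $\delta_{x_1}\cdots\delta_{x_t}=1$ as a map on $G$ precisely when $agb=g$ for every $g\in G$.

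The main step is to unpack the condition $agb=g$ for all $g$. Taking $g=1$ yields $ab=1$, i.e.\ $b=a\mo$, which is exactly condition (ii) of Lemma \ref{dis_stab_1} rephrased; this already shows $\delta_{x_1}\cdots\delta_{x_t}=1$ implies $\delta_{x_1}\cdots\delta_{x_t}\in\dis(Q)_1$, hence $x_1\cdots x_t=(x_t\cdots x_1)\mo$. Substituting $b=a\mo$ back into $agb=g$ gives $aga\mo=g$ for all $g\in G$, i.e.\ $a\in\zen(G)$. Conversely, if $a=x_1\cdots x_t=(x_t\cdots x_1)\mo\in\zen(G)$, then $b=a\mo$ and $agb=aga\mo=g$ for all $g$, so the product is the identity map. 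This establishes the equivalence of (i) and (ii).

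I do not expect a genuine obstacle here: the proof is a direct computation once the formula for $\delta_{x_1}\cdots\delta_{x_t}(g)$ is in hand, and the only mild subtlety is keeping the ordering straight — the word $x_1\cdots x_t$ appears on the left and its reverse $x_t\cdots x_1$ on the right, so one must be careful that the centrality of $a$ together with $b=a\mo$ is what collapses the two-sided multiplication to the identity. It is worth remarking that this lemma refines Lemma \ref{dis_stab_1}: stabilizing $1$ forces $a=b\mo$, while being trivial everywhere additionally forces $a\in\zen(G)$, so $\dis(Q)_1$ is trivial exactly when the only such words $a$ lie in the center — a point presumably exploited in the subsequent analysis of $\dis(Q)$.
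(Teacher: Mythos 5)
Your proposal is correct and follows essentially the same route as the paper's proof: evaluate the product $\delta_{x_1}\cdots\delta_{x_t}$ at $1$ to get $x_1\cdots x_t=(x_t\cdots x_1)\mo$, then substitute back into the action on an arbitrary element to conclude centrality, with the converse being the same computation read backwards. No gaps.
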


\begin{proof}
(ii) $\Rightarrow$ (i) If $x_1\cdots x_t=(x_t\cdots x_1)\mo \in \zen(G)$ then
\begin{align*}
    \delta_{x_1}\cdots \delta_{x_t}(a)=x_1\cdots x_t a x_t\cdots x_1=x_1\cdots x_t a (x_1\cdots x_t)\mo =a
\end{align*}
for every $a\in G$. Thus, $\delta_{x_1}\cdots \delta_{x_t}=1$.

(i) $\Rightarrow$ (ii) Assume that $\delta_{x_1}\cdots \delta_{x_t}=1$. Then
\begin{align*}
    \delta_{x_1}\cdots \delta_{x_t}=x_1\cdots x_t x_t\cdots x_1=1
\end{align*}
and so $x_1\cdots x_t=(x_t\cdots x_1)\mo $. Then 
\begin{align*}
    \delta_{x_1}\cdots \delta_{x_t}(a)=x_1\cdots x_t a x_t\cdots x_1=x_1\cdots x_t a (x_1\cdots x_t)\mo =a
\end{align*}
for every $a\in G$. Therefore $x_1\cdots x_t\in \zen(G)$.
\end{proof}

\begin{lemma}
Let $Q=\core(G)$ be a quandle, and let $K=\setof{(x,x\mo )}{x\in \zen(G)}$. Then
$$\psi:\dis(Q)\to \left(G\times G^{op}\right)/K,\quad \delta_{x_1}\cdots \delta_{x_t}\mapsto (x_1\cdots x_t,x_t\cdots x_1)K$$
is an injective group homomorphism.
\end{lemma}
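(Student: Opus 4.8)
The plan is to show that $\psi$ is well-defined, a homomorphism, and injective, in that order. For well-definedness I need to check that if $\delta_{x_1}\cdots\delta_{x_t} = \delta_{y_1}\cdots\delta_{y_s}$ in $\dis(Q)$, then $(x_1\cdots x_t, x_t\cdots x_1)K = (y_1\cdots y_s, y_s\cdots y_1)K$. Equivalently, since $K$ is the kernel of $\varphi$ from the earlier lemma, it suffices to show that $\delta_{x_1}\cdots\delta_{x_t} = \delta_{y_1}\cdots\delta_{y_s}$ implies $\varphi(x_1\cdots x_t, x_t\cdots x_1) = \varphi(y_1\cdots y_s, y_s\cdots y_1)$, i.e. $\lambda_{x_1\cdots x_t}\rho_{x_t\cdots x_1} = \lambda_{y_1\cdots y_s}\rho_{y_s\cdots y_1}$ as elements of $\sym(G)$. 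Now observe that $\delta_x = L_xL_1 = \lambda_x\rho_x$ acting on $G$ (since $\delta_x(a) = xax$), so $\delta_{x_1}\cdots\delta_{x_t} = \lambda_{x_1}\rho_{x_1}\cdots\lambda_{x_t}\rho_{x_t}$, and using that all $\lambda$'s commute with all $\rho$'s, this collapses to $\lambda_{x_1\cdots x_t}\rho_{x_t\cdots x_1}$. Hence $\delta_{x_1}\cdots\delta_{x_t}$ \emph{equals} $\lambda_{x_1\cdots x_t}\rho_{x_t\cdots x_1} = \varphi(x_1\cdots x_t, x_t\cdots x_1)$ as a concrete permutation, so the equality of products of $\delta$'s gives the equality of the corresponding $\varphi$ values immediately; this proves $\psi$ is well-defined.

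The homomorphism property is then essentially free: given two expressions $\delta_{x_1}\cdots\delta_{x_t}$ and $\delta_{y_1}\cdots\delta_{y_s}$, their product is $\delta_{x_1}\cdots\delta_{x_t}\delta_{y_1}\cdots\delta_{y_s}$, and
$$\psi(\delta_{x_1}\cdots\delta_{x_t}\delta_{y_1}\cdots\delta_{y_s}) = (x_1\cdots x_t y_1\cdots y_s,\ y_s\cdots y_1 x_t\cdots x_1)K,$$
which is exactly $(x_1\cdots x_t, x_t\cdots x_1)K \cdot (y_1\cdots y_s, y_s\cdots y_1)K$ computed in $(G\times G^{op})/K$, since the second coordinate multiplies in the opposite order. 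So $\psi = \varphi\restriction{\dis(Q)}$ composed with the identification $\varphi(G\times G^{op}) \cong (G\times G^{op})/K$; once phrased this way, well-definedness and homomorphism are one and the same observation.

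For injectivity, suppose $\psi(\delta_{x_1}\cdots\delta_{x_t}) = K$, i.e. $(x_1\cdots x_t, x_t\cdots x_1) \in K$. Then $x_1\cdots x_t \in \zen(G)$ and $x_t\cdots x_1 = (x_1\cdots x_t)\mo$. But this is precisely condition (ii) of Lemma \ref{lemma on action of dis}, which is equivalent to $\delta_{x_1}\cdots\delta_{x_t} = 1$. Hence $\ker\psi = 1$ and $\psi$ is injective. I do not anticipate a serious obstacle here: the only mildly delicate point is making sure the bookkeeping with $G^{op}$ and the order-reversal in the second coordinate is consistent, and that the map is genuinely defined on elements of $\dis(Q)$ rather than on words — both of which are handled cleanly by noting that $\psi$ is literally the restriction of $\varphi$ (read through the isomorphism $\varphi(G\times G^{op})\cong (G\times G^{op})/K$), so there is nothing to check beyond what the two preceding lemmas already give.
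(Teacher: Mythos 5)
Your proof is correct and follows essentially the same route as the paper: both hinge on the identity $\delta_{x_1}\cdots\delta_{x_t}=\lambda_{x_1\cdots x_t}\rho_{x_t\cdots x_1}$ and on Lemma \ref{lemma on action of dis} to control when a product of $\delta$'s is trivial. The only difference is presentational: you make well-definedness explicit by exhibiting $\psi$ as the restriction of $\varphi$ read through $(G\times G^{op})/K\cong\lambda(G)\rho(G)$, a point the paper's terser proof leaves implicit.
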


\begin{proof}
    The map $\psi$ is a group homomorphism, indeed, on the generators of $\dis(Q)$ we have
    \begin{align*}
        \psi( \delta_{x_1}\delta_{x_2})=(x_1 x_2,x_2 x_1)K=(x_1,x_1)K(x_2,x_2)K=\psi(\delta_{x_1})\psi(\delta_{x_2}).
    \end{align*}

Let $g,h\in \dis(Q)$. According to Lemma \ref{lemma on action of dis}, $\psi(g)=\psi(h)$ if and only if $g=h$. Then $\psi$ is a well defined injective group homomorphism.
    \end{proof}

\begin{corollary}\label{G/Z(G) factor of dis}
Let $Q=\core(G)$. Then $G/\zen(G)$ is a quotient of $\dis(Q)$.
\end{corollary}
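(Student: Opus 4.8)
The plan is to derive this as an immediate consequence of the injective homomorphism $\psi$ constructed in the previous lemma, composed with a suitable projection. First I would recall that $\psi\colon\dis(Q)\hookrightarrow (G\times G^{op})/K$ is injective, where $K=\setof{(x,x\mo)}{x\in\zen(G)}$. The target group $(G\times G^{op})/K$ is in general much larger than $\dis(Q)$, so the corollary cannot come simply from identifying $\dis(Q)$ with the whole thing; instead one wants a quotient of $(G\times G^{op})/K$ onto $G/\zen(G)$ under which the image of $\psi$ still surjects.

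The key observation is that the first-coordinate projection $\pi_1\colon G\times G^{op}\to G$ sends $K$ into $\zen(G)$, hence induces a well-defined homomorphism $\bar\pi_1\colon (G\times G^{op})/K\to G/\zen(G)$, $(a,b)K\mapsto a\zen(G)$. Composing, $\bar\pi_1\circ\psi\colon\dis(Q)\to G/\zen(G)$ sends $\delta_{x_1}\cdots\delta_{x_t}$ to $(x_1\cdots x_t)\zen(G)$. In particular $\delta_x\mapsto x\zen(G)$ for every $x\in G$, so since the cosets $x\zen(G)$ exhaust $G/\zen(G)$, the composite map $\bar\pi_1\circ\psi$ is onto. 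Therefore $G/\zen(G)$ is a homomorphic image of $\dis(Q)$, which is the claim.

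The only point requiring a moment's care is checking that $\bar\pi_1$ is well-defined on $K$: an element of $K$ has first coordinate $x$ with $x\in\zen(G)$, so its image in $G/\zen(G)$ is trivial, as needed. I would also note that one need not even invoke injectivity of $\psi$ here — only that $\psi$ is a homomorphism and that the composite $\bar\pi_1\circ\psi$ is surjective — but citing the previous lemma is the cleanest route. There is essentially no obstacle; the statement is a packaging of the machinery already set up, and the short proof should just spell out the composition $\dis(Q)\xrightarrow{\psi}(G\times G^{op})/K\xrightarrow{\bar\pi_1}G/\zen(G)$ and observe surjectivity on the generators $\delta_x$.
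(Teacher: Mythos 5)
Your proof is correct and essentially the same as the paper's: both compose $\psi$ with the map induced by reducing modulo $\zen(G)$ and projecting to the first coordinate, obtaining $\delta_{x_1}\cdots\delta_{x_t}\mapsto x_1\cdots x_t\zen(G)$, and conclude surjectivity from the images of the generators $\delta_x$ (the paper just writes this composite as passing through $(G/\zen(G))^2$ before projecting). Your added remarks — that well-definedness follows from $\pi_1(K)\subseteq\zen(G)$ and that injectivity of $\psi$ is not actually needed — are accurate.
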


\begin{proof}
The map 
\begin{align*}
\dis(Q)\to G\times G^{op}/K\to \left(G/\zen(G)\right)^2\to G/\zen(G)
\end{align*}
mapping
\begin{align*}
\delta_{x_1}\cdots \delta_{x_t}\mapsto (x_1\cdots x_t,x_t\cdots x_1)K\mapsto (x_1\cdots x_t \zen(G),(x_t\cdots x_1)\mo \zen(G))
\mapsto x_1\cdots x_t \zen(G)
\end{align*}
is a surjective group homomorphism.
\end{proof}


Note that the map defined in the proof of Corollary \ref{G/Z(G) factor of dis} is not injective in general. Indeed if $G$ is abelian but not elementary $2$-abelian then $\dis(Q)\neq 1$ and $G/\zen(G)=1$.

We can identify the orbits with respect to the left multiplication group using the group generated by the image of the squaring map $s$, as follows.

\begin{proposition}\label{core orbits}
    Let $Q=\core(G)$. Then $x^{\lmlt(Q)}=x\langle s(G)\rangle$ for every $x\in Q$.
\end{proposition}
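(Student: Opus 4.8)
The plan is to prove the equality by a double inclusion, the relevant group-theoretic object being the subgroup $\langle s(G)\rangle$ generated by all squares. First I would record its basic properties: it is normal in $G$ (a conjugate of a square is a square), and the quotient $G/\langle s(G)\rangle$ has exponent dividing $2$, hence is an elementary $2$-abelian group; consequently $\core(G/\langle s(G)\rangle)$ is a projection quandle.

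For the inclusion $x^{\lmlt(Q)}\subseteq x\langle s(G)\rangle$ I would pass to this quotient. By Remark~\ref{not iso iff} the canonical projection $\pi\colon\core(G)\to\core(G/\langle s(G)\rangle)$ is a surjective quandle homomorphism, so it intertwines left translations: $\pi\circ L_a=L_{\pi(a)}\circ\pi$ for every $a\in Q$. Since the target is a projection quandle, $L_{\pi(a)}=\id$, whence $\pi\circ g=\pi$ for every $g\in\lmlt(Q)$. Thus $g(x)$ and $x$ have the same image under $\pi$, i.e. $g(x)\in\pi\mo(\pi(x))=x\langle s(G)\rangle$, which is the desired inclusion.

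For the reverse inclusion $x\langle s(G)\rangle\subseteq x^{\lmlt(Q)}$ I would exhibit explicit elements of $\dis(Q)\leq\lmlt(Q)$ realising right multiplication by an arbitrary square. Recalling $\delta_a(y)=aya$, a one-line computation gives $\delta_a\delta_{za\mo}(a)=az^2$ for all $a\in Q$ and $z\in G$. Since $\langle s(G)\rangle$ is exactly the set of finite products of squares (because $(g^2)\mo=(g\mo)^2$), write a general element as $n=z_1^2\cdots z_k^2$ and compose such maps with base points taken to be the successive partial products $x,\,xz_1^2,\,xz_1^2z_2^2,\ldots$; an easy induction on $k$ then yields an element of $\dis(Q)$ sending $x$ to $xn$, so $xn\in x^{\dis(Q)}\subseteq x^{\lmlt(Q)}$. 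Combining the two inclusions proves the statement.

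I do not expect a real obstacle here. The points needing care are the elementary verifications that $\langle s(G)\rangle\trianglelefteq G$ with elementary $2$-abelian quotient (so that its core is a projection quandle), and the bookkeeping in the inductive step of the second inclusion, where the base point must be updated to the current partial product at each stage.
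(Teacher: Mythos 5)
Your proof is correct, but it takes a partly different route from the paper's. For the inclusion $x^{\lmlt(Q)}\subseteq x\langle s(G)\rangle$ the paper argues by a direct induction on the length of a word in the left translations, using the identity $L_y(x)=yx\mo y=x(x\mo y)^2$ and the normality of $\langle s(G)\rangle$ to push the accumulated element of $\langle s(G)\rangle$ past $x\mo y_{n+1}$ at each step; you instead pass to the quotient $G/\langle s(G)\rangle$, note that it is elementary $2$-abelian so that its core is a projection quandle, and conclude that every element of $\lmlt(Q)$ preserves the fibres of the canonical quandle homomorphism $\core(G)\to\core(G/\langle s(G)\rangle)$ of Remark~\ref{not iso iff}. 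Both arguments are sound; your structural one trades the paper's bookkeeping for the easy observations that $\langle s(G)\rangle$ is normal (indeed characteristic) with quotient of exponent dividing $2$. For the reverse inclusion $x\langle s(G)\rangle\subseteq x^{\lmlt(Q)}$ the paper is terse---its induction formally establishes only the forward containment, the converse being implicit in the formula $L_{xz}(x)=xz^2$---whereas you make it explicit via the elements $\delta_a\delta_{za\mo}\in\dis(Q)$, and your computation checks out: $\delta_{za\mo}(a)=(za\mo)a(za\mo)=z^2a\mo$ and $\delta_a(z^2a\mo)=az^2$, so composing along the successive partial products of $n=z_1^2\cdots z_k^2$ produces an element of $\dis(Q)\leq\lmlt(Q)$ mapping $x$ to $xn$. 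If anything, your write-up is the more complete of the two.
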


\begin{proof}
    Let $y\in Q$. Then $L_y(x)=y*x=yx\mo y=x(x\mo y)^2$. Let assume by induction that $L_{y_n}\cdots L_{y_1}(x)=xh$ for some $h\in \langle s(G)\rangle$. Then
    \begin{align*}
        L_{y_{n+1}}\cdots L_{y_1}(x)&=y_{n+1} (xh)\mo  y_{n+1}\\&
        =y_{n+1} h\mo x\mo  y_{n+1}\\
        &=x x\mo y_{n+1} h\mo  x\mo  y_{n+1}
        \end{align*}
        Since $\langle s(G)\rangle$ is normal in $G$ we have that $x\mo y_{n+1} h\mo =h'x\mo y_{n+1}$ for some $h'\in \langle s(G)\rangle$. So
        \begin{align*}
        L_{y_{n+1}}\cdots L_{y_1}(x)&=x  h' (x\mo  y_{n+1})^2\in x\langle s(G)\rangle.\qedhere
    \end{align*}
\end{proof}

The description of the orbits of the quandle allows us to characterize connectedness for cores, answering to the open question posted in \cite{saito}

\begin{corollary}\label{connected}
Let $G$ be a group and $Q=\core(G)$. The following are equivalent:
\begin{itemize}
\item[(i)] $Q$ is connected. 
\item[(ii)] $G=\langle s(G)\rangle$. 
\item[(iii)] $G=\setof{x_1\cdots x_tx_t\cdots x_1}{x_i\in G,\, t\in \mathbb{N}}.$
\end{itemize}
\end{corollary}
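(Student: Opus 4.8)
The plan is to deduce everything from Proposition~\ref{core orbits}, which already identifies the $\lmlt(Q)$-orbit of $x$ as the coset $x\langle s(G)\rangle$. Since $Q$ is connected exactly when this action is transitive, i.e.\ when the orbit of the identity $1$ is all of $Q=G$, connectedness is equivalent to $1\cdot\langle s(G)\rangle=\langle s(G)\rangle=G$; this gives the equivalence of (i) and (ii) immediately. For (ii)$\Leftrightarrow$(iii), I would observe that the set on the right of (iii) is precisely $\{\delta_{x_1}\cdots\delta_{x_t}(1)\colon x_i\in G,\ t\in\mathbb N\}=1^{\dis(Q)}$, the $\dis(Q)$-orbit of $1$, because $\delta_{x_1}\cdots\delta_{x_t}(1)=x_1\cdots x_t x_t\cdots x_1$ as computed in the proof of Lemma~\ref{dis_stab_1}. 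Since the orbits of $\dis(Q)$ and $\lmlt(Q)$ coincide (remarked after the second displayed Proposition in Section~\ref{sec: preliminaries}), this orbit equals $1^{\lmlt(Q)}=\langle s(G)\rangle$ as well, so (iii) says exactly $G=\langle s(G)\rangle$.

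Concretely, the one computation worth spelling out is that $\langle s(G)\rangle=\{x_1\cdots x_t x_t\cdots x_1\colon x_i\in G,\ t\in\mathbb N\}$ directly, without invoking the orbit machinery, as a clean alternative. One inclusion is clear: taking $t=1$ gives all squares $x_1^2$, and the right-hand set is closed under products of the form appearing (a word $x_1\cdots x_t x_t\cdots x_1$ times $y_1\cdots y_s y_s\cdots y_1$ can be rewritten by inserting the first word into the "middle" of a longer palindromic product, using that conjugates of squares are squares), hence it contains $\langle s(G)\rangle$. For the reverse inclusion, $x_1\cdots x_t x_t\cdots x_1$ telescopes: writing $w_k=x_1\cdots x_k$, one has $x_1\cdots x_t x_t\cdots x_1 = w_{t-1}x_t^2 w_{t-1}\mo\cdot w_{t-2}x_{t-1}^2 w_{t-2}\mo\cdots x_1^2$, a product of conjugates of squares, hence an element of the normal subgroup $\langle s(G)\rangle$.

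I expect the main (very mild) obstacle to be bookkeeping in this telescoping identity and in verifying that the palindromic set is closed under multiplication; both are elementary but need the normality of $\langle s(G)\rangle$ in $G$, which holds since $s(G)$ is invariant under all automorphisms, in particular under conjugation. Everything else is a direct translation through Proposition~\ref{core orbits}. A compact way to organize the writeup: prove (i)$\Leftrightarrow$(ii) in one line from Proposition~\ref{core orbits}, then prove (ii)$\Leftrightarrow$(iii) by the two inclusions above, so the corollary follows.
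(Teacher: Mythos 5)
Your argument is correct and follows essentially the same route as the paper: (i)$\Leftrightarrow$(ii) is read off from Proposition~\ref{core orbits}, and (iii) is identified with the $\dis(Q)$-orbit of $1$ via $\delta_{x_1}\cdots\delta_{x_t}(1)=x_1\cdots x_t x_t\cdots x_1$, together with the coincidence of the $\dis(Q)$- and $\lmlt(Q)$-orbits. Your supplementary direct verification that $\langle s(G)\rangle=\setof{x_1\cdots x_tx_t\cdots x_1}{x_i\in G,\ t\in\mathbb{N}}$ (telescoping into conjugates of squares, plus normality) is also sound, but it is not needed beyond the orbit argument the paper already uses.
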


\begin{proof}

The equivalence between (i) and (ii) follows by Proposition \ref{core orbits}.

(i) $\Leftrightarrow$ (iii)    The orbit of $1$ under $\dis(Q)$ is $L=\setof{x_1\cdots x_tx_t\cdots x_1}{x_i\in G,\, t\in \mathbb{N}}$. Then $Q$ is connected if and only if $G=L$.
\end{proof}

\begin{example}
    For every integer $n\ge 3$, the core quandle $\core(\S_n)$ is not connected and it has two orbits: $\A_n$ and $\S_n\setminus\A_n$. 
    Indeed, it is known that an element $\sigma\in\S_n$ is a square if and only if for every even $k\in\N$ the cycle structure of $\sigma$ has an even number of $k-$cycles, therefore $\langle s(\S_n)\rangle=\A_n$ for every $n\ge 3$.
    Now, according to Proposition \ref{core orbits}, for every $x\in\S_n$ we have 
    \[
        \O(x)=x\langle s(\S_n)\rangle=x\A_n=
        \begin{cases}
            \A_n,\quad&\text{if } x\in\A_n \\
            \S_n,\setminus\A_n\quad&\text{if } x\in\S_n\setminus\A_n 
        \end{cases}
    \]
\end{example}


\section{Group theoretical characterizations of core quandle properties}\label{properties}

We are now prepared to present the characterization theorems concerning the properties of cores in terms of those of the underlying groups. We start by restating a result first seen in \cite{umaya1976symmetric}. The \emph{Cayley kernel} of a quandle $Q$ is the congruence relation $\lambda_Q$ defined by setting \(x \,\lambda_Q\, y\) if and only if \(L_x = L_y\). Recall that a quandle $Q$ is \emph{faithful} if and only if $\lambda_Q$ is the identity relation. 



\begin{lemma}
Let $G$ be a group, $Q=\core(G)$ and $H=\setof{x\in \zen(G)}{x^2=1}$. Then:
\begin{itemize}
\item[(i)] $L_x=L_y$ if and only if $x\mo y\in H$. \item[(ii)] $Q/\lambda_Q=\core(G/H)$.
\item[(iii)] $Q$ is faithful if and only if $\zen(G)$ contains no involutions.
\end{itemize}
\end{lemma}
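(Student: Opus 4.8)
The plan is to unwind the definition of the core operation and compute directly. For part (i), note that $L_x(a) = x * a = x a\mo x$ for every $a \in G$. So $L_x = L_y$ means $x a\mo x = y a\mo y$ for all $a$, equivalently (replacing $a\mo$ by an arbitrary $b \in G$) that $x b x = y b y$ for all $b \in G$. Setting $b = 1$ gives $x^2 = y^2$, hence $y\mo x = x y\mo$ commutes appropriately; more cleanly, rewrite $xbx = yby$ as $(y\mo x) b = b (x y\mo)$ — wait, I should be careful. From $xbx = yby$ we get $y\mo x b = b x y\mo$ (multiplying on the left by $y\mo$ and on the right by $y\mo$... let me redo: $y\mo x b x y\mo = b$, so $(y\mo x) b (x y\mo) = b$). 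Write $z = y\mo x$; then $b = 1$ gives $z \cdot 1 \cdot x y\mo = 1$, so $x y\mo = z\mo$, i.e. $xy\mo = x\mo y$, which says $y\mo x = x\mo y$... hmm, that's $z = z\mo$, so $z^2 = 1$. And the general condition becomes $z b z\mo = b$ for all $b$, i.e. $z \in \zen(G)$. Thus $L_x = L_y$ iff $z = y\mo x = x\mo y \in \zen(G)$ with $z^2 = 1$, i.e. $x\mo y \in H$. I would also double-check commutativity of the element $x\mo y$ versus $y\mo x$: once $z := x\mo y \in \zen(G)$ this is immediate, so stating it with $x\mo y \in H$ is fine.

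For part (ii), the congruence $\lambda_Q$ has blocks exactly the cosets $xH$ by part (i), and since $H \subseteq \zen(G)$ is normal, $G/H$ is a group and the canonical projection $G \to G/H$ is a surjective quandle homomorphism $\core(G) \to \core(G/H)$ by Remark \ref{not iso iff}(ii); its kernel congruence is precisely $\lambda_Q$ because $xH = yH$ iff $x\mo y \in H$. Hence $Q/\lambda_Q \cong \core(G/H)$ by the first isomorphism theorem for quandles. I should make sure the identification is as quandles, not merely as sets, but that is exactly what the homomorphism theorem delivers.

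For part (iii), $Q$ is faithful iff $\lambda_Q$ is trivial iff $H = 1$ iff $\zen(G)$ has no elements of order $2$, i.e. no involutions; this is immediate from (i) once we observe $H = 1 \Leftrightarrow \zen(G)$ contains no involutions (the only element of $H$ is forced to be an involution or the identity).

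I expect no serious obstacle here; the one place to be careful is the algebraic manipulation in (i) — correctly deriving that $x\mo y$ must be both central \emph{and} an involution, rather than just one of the two — and making sure the substitution $a\mo \mapsto b$ genuinely ranges over all of $G$ (it does, since inversion is a bijection). The rest is bookkeeping with Remark \ref{not iso iff}.
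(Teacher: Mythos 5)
Your overall route is the same as the paper's: test $L_x=L_y$ at the identity, use the resulting relation together with the general identity $xb x=yby$ to push $z=y^{-1}x$ into the centre, and then conclude $z^2=1$; parts (ii) and (iii) are handled exactly as in the paper (the canonical projection $G\to G/H$ is a quandle epimorphism whose kernel congruence is $\lambda_Q$ by (i), and (iii) is the observation $H=1$ iff $\zen(G)$ has no involutions). The statements you end up with are correct.

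The one step that does not stand as written is your derivation of $z^2=1$ in (i). From $b=1$ you correctly obtain $xy^{-1}=z^{-1}$, i.e. $xy^{-1}=x^{-1}y$, but this is equivalent to $x^2=y^2$ and nothing more; the inference ``which says $y^{-1}x=x^{-1}y$, that's $z=z^{-1}$, so $z^2=1$'' is a non sequitur at that stage. For instance in $\S_3$ with $x=(12)$, $y=(13)$ one has $xy^{-1}=x^{-1}y$ while $z=y^{-1}x$ is a $3$-cycle. Your closing remark that once $x^{-1}y\in\zen(G)$ the equality $x^{-1}y=y^{-1}x$ is ``immediate'' has the same problem: centrality alone says nothing about the order of $z$ (think of an abelian group), so you must combine it with the $b=1$ relation. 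The repair is one line using facts you already have on the table: once $z=y^{-1}x\in\zen(G)$, write $x=yz$, so $xy^{-1}=yzy^{-1}=z$, and comparing with $xy^{-1}=z^{-1}$ gives $z^2=1$; equivalently, as in the paper, $(y^{-1}x)^2=y^{-1}xy^{-1}x=y^{-2}x^2=1$. Finally, state the converse implication of (i) explicitly (it is contained in your chain of equivalences, but deserves a sentence): for $a\in H$ and $y=xa$ one has $L_y(b)=xab^{-1}xa=xb^{-1}xa^2=L_x(b)$ for all $b$, which is exactly the paper's computation.
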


\begin{proof}
(i) Let $y=xa$ for $a\in H$ then  
$$L_y(z)=xa z\mo xa=xz\mo xa^2=x z\mo x=L_x(z).$$

Conversely, if $L_x=L_y$ then $L_x(1)=x^2=L_y(1)=y^2$, that is $x\mo y=xy\mo $. Since for all $z\in G$ $L_x(z )=xz^{-1}x=yz^{-1}y=L_y(z)$. Using the equality above we have
$$z^{-1}=y\mo x z^{-1}x y\mo=y\mo x  z^{-1}x\mo y=(y\mo x)  z^{-1}(y\mo x)\mo,$$
so $y\mo x\in \zen(G)$. Since $y\mo x\in \zen(G)$, we have
$$(y\mo x)^2 = y\mo x y\mo x = y\mo y\mo xx = 1.$$

(ii) Using the statement above, it is enough to observe that
\[
     x\, \lambda_Q\, y \iff  x\mo y\in H .
\]
Thus, the kernel of the canonical map $\core(G)\to \core(G/H)$ is $\lambda_Q$.

(iii) Note that $\lambda_Q$ is the identity relation if and only if $H=1$, that is there are no involutions in $\zen(G)$.
%
\end{proof}

\begin{corollary}
Let $G$ be a finite group. Then $\core(G)$ is faithful if and only if $|\zen(G)|$ is odd. 
\end{corollary}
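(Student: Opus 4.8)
The plan is to deduce this immediately from part (iii) of the preceding lemma together with elementary finite group theory. By that lemma, $\core(G)$ is faithful precisely when $\zen(G)$ contains no involutions, i.e.\ when the abelian group $\zen(G)$ has no element of order $2$. So the entire task reduces to the classical fact that a finite abelian group $A$ has an element of order $2$ if and only if $|A|$ is even.

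\medskip

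First I would recall this fact and its short proof. If $|A|$ is odd then by Lagrange's theorem no element can have order $2$, since $2 \nmid |A|$. Conversely, if $|A|$ is even, then by Cauchy's theorem (or, more elementarily, by pairing each element with its inverse: the elements $x$ with $x \neq x\mo$ come in pairs, so the number of elements satisfying $x^2 = 1$ is congruent to $|A| \equiv 0 \pmod 2$, and since $1$ is such an element there must be at least one more, namely a genuine involution). Applying this with $A = \zen(G)$, which is a finite abelian group, gives that $\zen(G)$ has an involution if and only if $|\zen(G)|$ is even; equivalently, $\zen(G)$ has no involution if and only if $|\zen(G)|$ is odd.

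\medskip

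Combining the two, $\core(G)$ is faithful $\iff$ $\zen(G)$ has no involutions $\iff$ $|\zen(G)|$ is odd, which is the claim. There is no real obstacle here; the only thing to be careful about is to invoke the right earlier result (part (iii) of the lemma) and to state clearly which group-theoretic fact about involutions in finite abelian groups is being used. I would keep the proof to a couple of sentences, citing the lemma and the parity argument.
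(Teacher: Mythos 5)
Your argument is correct and matches the paper's intent exactly: the paper states this corollary without proof, as an immediate consequence of part (iii) of the preceding lemma combined with the standard fact (Cauchy/Lagrange, or your pairing argument) that a finite group has an involution if and only if its order is even. Nothing is missing.
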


We proceed with a characterization of superfaithful core quandles. Recall that a quandle $Q$ is \emph{superfaithful} if every subquandle is faithful, or equivalently, if $\fix(L_x)=\{x\}$ for every $x\in Q$ \cite[Lemma 1.9]{Super}.

\begin{lemma}
    Let $Q=\core(G)$ and $x,y\in Q$. Then 
    
    \begin{itemize}
    \item[(i)] $x*y=y$ if and only if $|x\mo y|=2$. 
    \item[(ii)] $Q$ is a crossed set. 
    \item[(ii)]  $Q$ is superfaithful if and only if $G$ has no involutions.
    \end{itemize}

\end{lemma}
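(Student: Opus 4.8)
The statement has three parts; I would prove (i) directly by a short computation in $G$, then derive (ii) as an immediate corollary, and finally use (i) again to get (iii). All three are really driven by the single identity $x*y = xy\mo x$.

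\textbf{Part (i).} The goal is to show $x*y = y \iff |x\mo y| = 2$. Unwinding the definition, $x*y = y$ means $xy\mo x = y$, i.e. $x y\mo x y\mo = 1$. Setting $z = x\mo y$ (equivalently $y\mo x = z\mo$), I would rewrite this: $xy\mo x y\mo = (y z\mo)(z\mo y\mo)\cdot$ — rather than juggle, the cleanest route is to left-multiply $xy\mo x = y$ by $x\mo$ to get $y\mo x = x\mo y$, i.e. $z\mo = z$, which says exactly $z^2 = 1$. One must note $z \neq 1$ is \emph{not} required here, so strictly "$|x\mo y|=2$" should be read as "$(x\mo y)^2 = 1$"; if $x = y$ this degenerates, and indeed $x*x = x$ always, consistent with $(x\mo x)^2 = 1$. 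I would state the computation as: $x*y = y \iff xy\mo x = y \iff y\mo x = x\mo y \iff (x\mo y)^2 = 1$, each step being multiplication by a unit.

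\textbf{Part (ii): $Q$ is a crossed set.} By definition I must check $x*y = y \iff y*x = x$. By part (i), $x*y = y \iff (x\mo y)^2 = 1$ and $y*x = x \iff (y\mo x)^2 = 1$. Since $y\mo x = (x\mo y)\mo$, and an element has order dividing $2$ iff its inverse does, these two conditions are literally the same. So the equivalence is immediate. (This also reproves that $\core(G)$ is involutory, but that was already noted.)

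\textbf{Part (iii): superfaithful $\iff$ $G$ has no involutions.} Using the quoted characterization $\fix(L_x) = \{x\}$ for all $x$, I note $y \in \fix(L_x) \iff x*y = y \iff (x\mo y)^2 = 1$ by (i). If $G$ has an involution $a$, take any $x$ and $y = xa$; then $x\mo y = a$ has order $2$, so $y \in \fix(L_x)$ but $y \neq x$, hence $Q$ is not superfaithful. Conversely, if $Q$ is not superfaithful there is some $x$ and $y \neq x$ with $(x\mo y)^2 = 1$; then $x\mo y$ is a nonidentity element of order $2$, an involution in $G$. So the equivalence follows. The only mild subtlety — and the one place I'd be careful — is the boundary case $y = x$ in (i), making sure the "$=2$" in the statement is interpreted as "order divides $2$" (so that idempotence is not contradicted) while in (iii) the witness $y \neq x$ forces a genuine involution; none of this is a real obstacle, just a matter of stating the order condition precisely.
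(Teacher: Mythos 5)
Your proposal is correct and follows essentially the same route as the paper: a direct computation giving $x*y=y \iff x\mo y=(x\mo y)\mo$ for (i), the inverse-has-same-order observation for (ii), and the $\fix(L_x)=\{x\}$ characterization combined with (i) for (iii), with your version merely spelling out the witness $y=xa$ more explicitly. Your remark that "$|x\mo y|=2$" should be read as "$(x\mo y)^2=1$" (to cover the degenerate case $x=y$) is a fair point of precision, but it does not change the argument.
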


\begin{proof}
(i) Note that $x * y=xy\mo x=y$ if and only if $x\mo y=y\mo x=(x\mo y)\mo $, i.e. $|x\mo y|=2$.

(ii) Note that $|y\mo x|=|(y\mo x)\mo |=|x\mo y|$. Thus $x*y=y$ if and only if $y*x=x$. Thus, $\core(G)$ is a crossed set.

(iii) Let $x,y\in Q$ be such that $y\in Fix(L_x)$, i.e. $x\ast y = y$. Thus, according to (i) $|x\mo  y|= 2$. So, $Q$ is superfaithful if and only if $G$ has no involutions.
\end{proof}





A quandle $Q$ is \emph{latin} if also all right translations (defined by $R_x:y\mapsto y*x$ for every $x,y\in Q$) are bijective maps. Recall that a group $G$ is said to be \emph{uniquely 2-divisible} if the squaring map $s:x\mapsto x^2$ is bijective. 

\begin{proposition}[\cite{kano1976finite}]
    $\core(G)$ is latin if and only if $G$ is uniquely 2-divisible.
\end{proposition}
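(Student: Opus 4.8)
The plan is to prove the two implications separately, working directly from the definition of the core operation $x * y = xy\mo x$.

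First I would recall that a quandle is latin when every right translation $R_x\colon y \mapsto y * x$ is a bijection; since in a core quandle the left translations $L_x$ are always bijective (they are even involutions), latinness is exactly the statement that all $R_x$ are bijective. For the direction ``$G$ uniquely $2$-divisible $\Rightarrow$ $\core(G)$ latin'': fix $x \in G$ and consider $R_x(y) = yx\mo y$. I would observe that $yx\mo y = z$ is equivalent to $(yx\mo)^2 = zx\mo$, i.e. $yx\mo$ is a square root of $zx\mo$. Since $s$ is a bijection, $zx\mo$ has a unique square root, say $w$, and then $y = wx$ is the unique preimage; hence $R_x$ is a bijection. (Equivalently, one can write $R_x = \rho_x \c s \c \rho_{x\mo}$ up to the appropriate identification, so $R_x$ is bijective iff $s$ is.) This gives one implication.

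For the converse ``$\core(G)$ latin $\Rightarrow$ $G$ uniquely $2$-divisible'': I would specialize to show $s$ is injective and surjective. Surjectivity: taking $x = 1$, the map $R_1(y) = y \cdot 1 \cdot y = y^2 = s(y)$, and $R_1$ being surjective gives $s(G) = G$. Injectivity: again using $R_1 = s$, the injectivity of $R_1$ is literally the injectivity of $s$. So in fact only the single translation $R_1$ needs to be examined, which makes this direction essentially immediate once the previous direction's computation has identified $R_x$ correctly. One should double-check the orientation conventions so that $R_1$ really is $s$ and not $\iota \c s$ or similar; with $x*y = xy\mo x$ we have $y * 1 = y \cdot 1\mo \cdot y = y^2$, so indeed $R_1 = s$.

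There is no serious obstacle here; the only thing to be careful about is keeping the left/right and inverse conventions straight, and phrasing the ``$R_x$ is conjugate (by right translations) to $s$'' observation cleanly — this is what makes both directions transparent and shows that latinness of $\core(G)$ is controlled entirely by the behaviour of the squaring map. I would present the proof in three or four lines: identify $R_x$ with a composite involving $s$, deduce latin $\Leftrightarrow$ all $R_x$ bijective $\Leftrightarrow$ $s$ bijective $\Leftrightarrow$ $G$ uniquely $2$-divisible.
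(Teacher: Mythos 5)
Your argument is correct and is essentially the paper's proof: the identity $R_x=\rho_x\c s\c\rho_x\mo$ (conjugation by the right-translation automorphisms) is exactly the explicit form of the homogeneity argument the paper uses to reduce latinness to the bijectivity of $R_1$, and both proofs then conclude from $R_1=s$.
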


\begin{proof}
    The quandle $\core(G)$ is homogeneous, so $Q$ is latin if and only if $R_1$ is bijective.
    Note that for a core quandle $Q=\core(G)$, we have $R_1(x)=x*1=x^2=s(x)$. So, $Q$ is latin if and only if it is uniquely 2-divisible.
\end{proof}


A finite involutory quandle $Q$ is latin if and only if $Q$ is superfaithful \cite{Super}. So we have the following.

\begin{corollary} \label{superf_cuperc_latin}
    Let $Q=\core(G)$ be finite. The following conditions are equivalent.
        \begin{enumerate}
            \item $Q$ is superfaithful;
            \item $Q$ is latin;
            \item $|G|$ is odd.
        \end{enumerate}

\end{corollary}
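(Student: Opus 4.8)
The plan is to assemble Corollary \ref{superf_cuperc_latin} from the pieces already established, so the proof is essentially a bookkeeping exercise combining three prior results. First I would invoke the preceding Proposition (\cite{kano1976finite}) together with the characterization of superfaithfulness from the last Lemma: $Q=\core(G)$ is latin precisely when $G$ is uniquely $2$-divisible, and $Q$ is superfaithful precisely when $G$ has no involutions. Since $G$ is finite, uniquely $2$-divisible means the squaring map $s$ is bijective, which (again by finiteness) is equivalent to $s$ being injective, i.e. $x^2=1$ implies $x=1$, i.e. $G$ has no involutions. That already gives the equivalence of (i) and (ii), and shows both are equivalent to ``$G$ has no involutions''.

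Next I would translate ``$G$ has no involutions'' into the arithmetic condition ``$|G|$ is odd'' using Cauchy's theorem (or, if one prefers to stay within the quandle framework, the quoted fact that a finite involutory quandle is latin iff superfaithful, but the group-theoretic route is cleaner here): if $|G|$ is even, Cauchy gives an element of order $2$; conversely if $|G|$ is odd, Lagrange forbids any element of order $2$. This closes the loop $(i)\Leftrightarrow(ii)\Leftrightarrow(iii)$.

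Concretely, the proof would read roughly as follows.

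\begin{proof}
By the preceding Proposition, $Q$ is latin if and only if $G$ is uniquely $2$-divisible, that is, the squaring map $s$ is a bijection of $G$. Since $G$ is finite, $s$ is a bijection if and only if it is injective, and $s$ is injective if and only if $x^2=1$ forces $x=1$; equivalently, $G$ has no involutions. By part (iii) of the last Lemma, $Q$ is superfaithful if and only if $G$ has no involutions. Hence (i) and (ii) are equivalent, and both are equivalent to the condition that $G$ contains no involution.

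Finally, by Cauchy's theorem a finite group has an element of order $2$ whenever its order is even, while by Lagrange's theorem no such element exists when the order is odd. Thus $G$ has no involutions if and only if $|G|$ is odd, which is condition (iii).
\end{proof}

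The only mild subtlety — hardly an obstacle — is making sure the finiteness hypothesis is used in exactly the two places it is needed: to pass from ``$s$ bijective'' to ``$s$ injective'' (so that unique $2$-divisibility reduces to absence of involutions), and to apply Cauchy's theorem for the arithmetic reformulation. Everything else is a direct citation of the Proposition and Lemma stated just above, so no new computation is required.
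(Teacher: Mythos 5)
Your route differs from the paper's: the paper gets (i)$\Leftrightarrow$(ii) in one stroke by citing the fact from \cite{Super} that a finite involutory quandle is latin if and only if it is superfaithful, and then only needs Lemma (iii) plus the parity argument; you instead re-derive the equivalence for cores from the Proposition of \cite{kano1976finite} (latin $\Leftrightarrow$ uniquely $2$-divisible) and Lemma (iii). That is a legitimate and more self-contained plan, but as written it has one genuine gap: the step ``$s$ is injective if and only if $x^2=1$ forces $x=1$''. The squaring map is not a group homomorphism, so its injectivity is not equivalent to the triviality of its ``kernel''; the direction you actually need, namely that absence of involutions implies $s$ is injective, is precisely the nontrivial content of (i)/(iii)$\Rightarrow$(ii) and cannot be waved through with ``equivalently''. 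Indeed it is false without finiteness: in the torsion-free Klein bottle group $\langle a,b \mid aba^{-1}=b^{-1}\rangle$ one has $(ab)^2=a^2$ with $ab\neq a$, so squaring is not injective even though there are no involutions. So finiteness is needed not only to pass from ``bijective'' to ``injective'' (the one place you flag it), but in this very implication.

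The repair is short: if $G$ is finite with no involutions, then every element has odd order (an element of order $2m$ yields the involution $x^m$), hence every $x$ satisfies $x=\bigl(x^{(|x|+1)/2}\bigr)^2$, so $s$ is surjective and therefore bijective because $G$ is finite; this gives ``no involutions $\Rightarrow$ latin'', while the converse direction (injective $s$ forces $x^2=1\Rightarrow x=1$) is the trivial one you already have. With that line inserted, your chain latin $\Leftrightarrow$ no involutions $\Leftrightarrow$ superfaithful is complete, and your final step (Cauchy for even order, Lagrange for odd order) correctly converts ``no involutions'' into ``$|G|$ odd''.
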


    Note that the equivalence of Corollary \ref{superf_cuperc_latin} does not hold for infinite core quandles. Consider $\core(\mathbb{Z})$ as a counterexample.

\begin{proposition}
Let $G$ be a finite nilpotent group and $Q=\core(G)$. Then $Q$ is connected if and only if $Q$ is latin. 
\end{proposition}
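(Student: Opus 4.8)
The plan is to use the two characterizations already available: connectedness is equivalent to $G = \langle s(G)\rangle$ (Corollary \ref{connected}), and latinness is equivalent to $G$ being uniquely $2$-divisible (the proposition of \cite{kano1976finite}). For a finite group, uniquely $2$-divisible means the squaring map $s$ is a bijection, which for finite $G$ is equivalent to $s$ being surjective, i.e. $s(G) = G$. So I must show: for $G$ finite nilpotent, $\langle s(G)\rangle = G$ if and only if $s(G) = G$. One direction is trivial ($s(G) = G \Rightarrow \langle s(G)\rangle = G$), so the content is the forward direction.

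First I would reduce to the case of $p$-groups. Since $G$ is finite nilpotent, $G = P_1 \times \cdots \times P_k$ is the direct product of its Sylow subgroups, and the squaring map acts coordinatewise; moreover direct products of cores are cores of direct products (the earlier proposition), and connectedness and latinness both pass to and from direct factors. So it suffices to prove the claim for $G$ a finite $p$-group. For $p$ odd, the squaring map on a $p$-group is already a bijection (since $2$ is invertible mod $|G|$), so both conditions hold automatically and there is nothing to prove. Thus the real case is $p = 2$: I must show that a finite $2$-group $G$ with $\langle s(G)\rangle = G$ in fact satisfies $s(G) = G$ — but for a $2$-group, every proper subgroup is contained in a proper normal subgroup (equivalently, $G/\Phi(G)$ is a nontrivial elementary abelian $2$-group in which squares are trivial), and $\langle s(G)\rangle \le \Phi(G) < G$ unless $G = 1$. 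So if $G \neq 1$ is a finite $2$-group, $\langle s(G)\rangle \neq G$, hence $\core(G)$ is not connected; and it is also not latin since $s$ is not injective. Therefore for $G \neq 1$ a finite $2$-group both conditions fail, and for $G = 1$ both hold trivially — the equivalence holds in all cases.

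Putting this together: write $G = P_1 \times \cdots \times P_k$ with $P_i$ the Sylow $p_i$-subgroup. If $2 \nmid |G|$, then $s$ is a bijection on $G$, so $\core(G)$ is both latin and connected. If $2 \mid |G|$, say $P_1$ is the Sylow $2$-subgroup with $P_1 \neq 1$, then $\langle s(P_1)\rangle \le \Phi(P_1) < P_1$, so $\langle s(G)\rangle < G$, hence $\core(G)$ is not connected; and $s$ is not injective on $P_1$ (since $P_1$ has an element of order $2$), so $\core(G)$ is not latin. In either case the two properties are equivalent.

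The main obstacle is the key group-theoretic fact that in a finite $2$-group the subgroup generated by all squares is proper — i.e. $\langle s(G)\rangle \le \Phi(G)$. This is standard: the Frattini subgroup of a finite $p$-group equals $G^p[G,G]$, so in particular $\langle s(G)\rangle \le G^2 \le \Phi(G)$, and $\Phi(G) < G$ for $G \neq 1$. I would cite this rather than reprove it. One should also double-check the reduction step — that connectedness of $\core(\prod G_i)$ is equivalent to connectedness of each $\core(G_i)$ — which follows cleanly from Corollary \ref{connected} since $s$ and $\langle s(-)\rangle$ respect direct products, and similarly for latinness since $s$ is bijective on a product iff it is bijective on each factor.
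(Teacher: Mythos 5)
Your proof is correct and follows essentially the same route as the paper: both reduce to the Sylow $2$-subgroup $K$ of the nilpotent group and exploit its Frattini quotient, the paper by passing to the connected projection quandle $\core(K/\Phi(K))$ and you by noting $\langle s(K)\rangle\le\Phi(K)<K$, which is the same fact read through Corollary \ref{connected}. Your concluding step (odd order forces latinness via unique $2$-divisibility) likewise matches the paper's appeal to Corollary \ref{superf_cuperc_latin}.
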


\begin{proof}
Latin quandles are connected. Assume that $Q$ is connected and $K$ be the $2$-Sylow of $G$. The group $G$ is the direct product of its Sylow subgroups. So we have that the canonical projection onto $K$ is a quandle morphism from $\core(G)$ and $\core(K)$. Let $\Phi(K)$ be the Frattini subgroup of $K$. The quandle $\core(K/\Phi(K))$ is a factor of $\core(K)$. Since $K/\Phi(K)$ is a $2$-elementary abelian group the $\core(K/\Phi(K))$ is projection. Thus $\core(K/\Phi(K))$ is trivial and accordingly $K$ is trivial (the Frattini subgroup of a nilpotent group is proper). Hence $|G|$ is odd and according to Corollary \ref{superf_cuperc_latin} $Q$ is latin.
\end{proof}

A quandle $Q$ is {\it right-distributive} if 
$$(x*y)*z=(x*z)*(y*z)$$
for every $x,y,z\in Q$, or equivalently, if right translation maps are quandle endomorphisms. Right-distributivity for core quandles can be characterized as follows.

\begin{theorem}[\cite{belousov1963class}]\label{right distr}
    Let $Q=\core(G)$. The following are equivalent
    \begin{itemize}
    \item[(i)] $Q$ is right-distributive.
    \item[(ii)] $G$ is a $2$-Engel group, that is $[x,yxy\mo ]=1$ for every $x,y\in G$.
    \item[(iii)] $x^G$ is a projection subquandle of $\conj(G)$ for every $x\in G$.
    \end{itemize}
\end{theorem}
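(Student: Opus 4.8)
The plan is to unwind the definition of the core operation and reduce each implication to an identity in the group $G$. First I would compute $(x*y)*z$ and $(x*z)*(y*z)$ directly: we have $(x*y)*z = (xy\mo x) z\mo (xy\mo x)$ and $(x*z)*(y*z) = (xz\mo x)(yz\mo y)\mo (xz\mo x) = xz\mo x y\mo z y\mo z\mo x$. Setting these equal and cancelling the common factors $x$ on the left and the final $x$ on the right (and then the outer $z\mo$ on the left of what remains), right-distributivity becomes the identity $y\mo x z\mo x y\mo = z\mo x y\mo z y\mo z\mo$ for all $x,y,z$, which after substituting $z=1$ or renaming variables should collapse to the $2$-Engel law $[x,\,{}^{y}x]=1$ where ${}^{y}x=yxy\mo$. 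The cleanest route is probably to verify (i) $\Leftrightarrow$ (iii) first, since (iii) is the most transparent: $x^G$ being a projection subquandle of $\conj(G)$ means $a b a\mo = b$ for all $a,b$ in the conjugacy class of $x$, i.e. any two conjugates of $x$ commute, which is exactly $[x,\,yxy\mo]=1$ for all $y$ when quantified over all $x$ — this is (ii). So (ii) $\Leftrightarrow$ (iii) is essentially a restatement.

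For (i) $\Leftrightarrow$ (ii) the substantive direction is showing that right-distributivity of $\core(G)$ forces the $2$-Engel law. Here I would use Theorem 1.11's own part (iii) idea in reverse, or argue directly: right-distributivity says $R_z$ is an endomorphism of $\core(G)$ for every $z$, and $R_1 = s$ is the squaring map; more usefully, since $\core(G)$ is homogeneous it suffices to test the right-distributive identity with $z$ fixed arbitrarily and exploit transitivity. The key computation is to show the displayed identity above is equivalent, after the substitution $x \mapsto$ (suitable product) or simply setting $y = 1$, to $x z\mo x = z\mo x \cdot x \cdot z\mo$... — more carefully, putting $y=1$ in $(x*y)*z = (x*z)*(y*z)$ gives $x*z = (x*z)*z$, i.e. $R_z^2 = R_z$ on the relevant elements, which is automatic ($R_z$ is not idempotent in general, so one must be careful) — so instead I would keep all three variables and extract the commutator relation by comparing the two sides as words in the free group modulo the consequences of the identity, using that $x,y,z$ range over all of $G$.

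For the converse, (ii) $\Rightarrow$ (i): assuming $G$ is $2$-Engel, I would substitute back into the reduced identity $y\mo x z\mo x y\mo = z\mo x y\mo z y\mo z\mo$ and verify it using the standard consequences of the $2$-Engel law (e.g. that $2$-Engel groups are nilpotent of class $\le 3$, the identity $[x,y,z]=[y,z,x]$, and $[x,y]$ central-ish relations), or more elementarily by repeatedly applying $[a,\,bab\mo]=1$ to move conjugates of a fixed element past each other. The main obstacle I expect is precisely this bookkeeping: verifying that the group-word identity equivalent to right-distributivity is logically equivalent (as a universally quantified identity, not just as a single instance) to the $2$-Engel law — one must be careful that substituting specific values like $y=1$ only gives necessary conditions, and that conversely the $2$-Engel law is strong enough to recover the full three-variable identity. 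I would handle this by first deriving, from right-distributivity, enough special instances to get $[x,yxy\mo]=1$, and then separately checking that $2$-Engel groups satisfy the full identity, so that the two directions are proved by genuinely different manipulations rather than a single equivalence of words.
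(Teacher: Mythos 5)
Your handling of (ii) $\Leftrightarrow$ (iii) is fine and agrees with the paper: projection on $x^G$ in $\conj(G)$ means conjugates of $x$ commute, which is the $2$-Engel law. The gap is in (i) $\Leftrightarrow$ (ii), which you never actually carry out, and your opening computation is already off: $(x*z)*(y*z)=(xz^{-1}x)(yz^{-1}y)^{-1}(xz^{-1}x)=xz^{-1}x\,y^{-1}zy^{-1}\,xz^{-1}x$, whereas you wrote $xz^{-1}xy^{-1}zy^{-1}z^{-1}x$ (an $x$ was dropped), so the ``reduced identity'' you propose to manipulate is not the right one. More importantly, you raise the homogeneity reduction, then test it by setting the \emph{middle} variable $y=1$, correctly observe that this trivializes, and retreat to ``keep all three variables'' plus, for the converse, structural facts about $2$-Engel groups (nilpotency class $\le 3$, commutator identities) that you only announce and never verify.

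The reduction you abandoned is precisely the paper's proof, applied to the correct variable. Right-distributivity says every right translation $R_z$ is a quandle endomorphism, and since $\lambda_z R_1\lambda_z^{-1}(x)=z\bigl((z^{-1}x)*1\bigr)=z(z^{-1}x)^2=xz^{-1}x=R_z(x)$ with $\lambda_z\in\aut(Q)$, it is \emph{equivalent} that $R_1$ alone be an endomorphism, i.e.\ that $(x*y)*1=(x*1)*(y*1)$ for all $x,y$. That identity reads $(xy^{-1}x)^2=x^2y^{-2}x^2$, and the substitution $y^{-1}=zx^{-1}$ turns it into $zxzx^{-1}=xzx^{-1}z$, i.e.\ $[z,xzx^{-1}]=1$ with $z$ ranging over all of $G$. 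Because this is an equivalence of identities, both directions of (i) $\Leftrightarrow$ (ii) come out of one two-variable computation, with no need for the theory of $2$-Engel groups. As written, your proposal establishes neither (i) $\Rightarrow$ (ii) (the extraction of the commutator law from the three-variable identity is left at ``should collapse'') nor (ii) $\Rightarrow$ (i) (the verification via $2$-Engel consequences is not done), so the substantive equivalence remains unproved.
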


\begin{proof}
 (i) $\Leftrightarrow$ (ii) The quandle $Q$ is homogeneous, so it is enough to prove $(x*y)*1=(x*1)*(y*1)$ for all $x,y\in G$. We have
 \begin{align*}
(x*y)*1&=xy\mo x xy\mo x\\
(x*1)*(y*1)&=x^2y^{-2} x^2
 \end{align*}
Replacing $y\mo $ by $zx\mo $ we have that right distributivity is equivalent to
 \begin{align*}
z xzx\mo =x zx\mo  zx\mo x=x zx\mo  z.
 \end{align*}

 (ii) $\Leftrightarrow$ (iii) The conjugacy class of $x$ is $x^G=\setof{yxy\mo }{y\in G}$ and it is a subquandle of $\conj(G)$. Thus $x^G$ is a projection subquandle if and only if $(yxy\mo )x(yxy\mo )\mo =x$ for every $y\in G$. 
\end{proof}




We conclude this section by reproving a known theorem about mediality, specifically, an unpublished result by Roszkowska that characterize medial cores. Recall that a quandle $Q$ is {\it medial} if
$$(x*y)*(z*t)=(x*z)*(y*t)$$
for every $x,y,z,t\in Q$. It is known that a quandle is medial if and only if its displacement group is abelian \cite{Medial}.
\begin{theorem}[B. Roszkowska]\label{core medial}
    $\core(G)$ is medial if and only if $G$ is 2-nilpotent.
\end{theorem}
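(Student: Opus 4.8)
The plan is to use the criterion that $\core(G)$ is medial if and only if $\dis(Q)$ is abelian, together with the injective homomorphism $\psi\colon\dis(Q)\to (G\times G^{op})/K$ constructed above, which sends $\delta_{x_1}\cdots\delta_{x_t}$ to $(x_1\cdots x_t,\,x_t\cdots x_1)K$. Since $\dis(Q)$ is generated by the $\delta_x$ for $x\in G$, it is abelian exactly when $\psi(\delta_x)$ and $\psi(\delta_y)$ commute in $(G\times G^{op})/K$ for all $x,y\in G$. I would first unwind this commuting condition: $\psi(\delta_x)\psi(\delta_y)=(xy,yx)K$ while $\psi(\delta_y)\psi(\delta_x)=(yx,xy)K$, so the two agree iff $((xy)(yx)\mo,\,(yx)(xy)\mo)\in K$, i.e. iff $xyx\mo y\mo\in\zen(G)$ and equals the inverse of $yxy\mo x\mo$ — but the latter is automatic since $(yxy\mo x\mo)\mo = xyx\mo y\mo$. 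Hence the condition collapses to: $[x,y]\in\zen(G)$ for all $x,y\in G$, which is precisely the statement that $G$ is nilpotent of class at most $2$ (``$2$-nilpotent'' in the paper's terminology, meaning $G/\zen(G)$ is abelian, equivalently $\gamma_3(G)=1$).

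So the forward direction ($\core(G)$ medial $\Rightarrow$ $G$ $2$-nilpotent) follows by reading the above equivalence: mediality gives $\dis(Q)$ abelian, which via $\psi$ forces $[x,y]\in\zen(G)$ for all $x,y$. For the converse, if $[x,y]\in\zen(G)$ for all $x,y\in G$, I would check directly that all generators $\delta_x,\delta_y$ of $\dis(Q)$ commute: compute $\delta_x\delta_y(a)=xy\,a\,yx$ and $\delta_y\delta_x(a)=yx\,a\,xy$ for arbitrary $a\in G$, and observe $(xy)\mo(yx) = y\mo x\mo y x = [y\mo, x\mo]\in\zen(G)$, so $xy$ and $yx$ differ by a central element, whence $xy\,a\,yx = yx\,a\,xy$ for all $a$. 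Thus $\dis(Q)$ is abelian and $\core(G)$ is medial. Alternatively one can invoke the already-established fact that $G/\zen(G)$ is a quotient of $\dis(Q)$ (Corollary \ref{G/Z(G) factor of dis}) to streamline one implication, but the $\psi$-based computation handles both directions uniformly.

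I expect the only real subtlety to be bookkeeping around the kernel $K=\setof{(x,x\mo)}{x\in\zen(G)}$: one must be careful that the membership $(u,v)\in K$ requires simultaneously $u\in\zen(G)$ and $v=u\mo$, and verify that in our situation the second condition is automatic (as noted, $yxy\mo x\mo = (xyx\mo y\mo)\mo$), so that the commuting condition in the quotient is genuinely equivalent to centrality of the commutator and nothing more. There is no deep obstacle here — the homogeneity reduction and the explicit homomorphism $\psi$ do all the work; the main point is simply to translate ``commutator lies in the kernel $K$'' correctly into ``commutator is central.''
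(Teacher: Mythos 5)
Your argument is correct in substance and reaches the same pivot as the paper (mediality $\iff$ $\dis(Q)$ abelian, and $\dis(Q)$ probed through the injective homomorphism $\psi$), but it is organized differently. The paper's proof splits asymmetrically: the forward direction invokes Corollary \ref{G/Z(G) factor of dis} ($G/\zen(G)$ is a quotient of $\dis(Q)$, itself a consequence of $\psi$), so that $\dis(Q)$ abelian immediately forces $G/\zen(G)$ abelian; the converse is the bare-hands computation $[\delta_x,\delta_y](z)=z$ using $[x,y]\in\zen(G)$. You instead work both directions uniformly through $\psi$, translating commutativity of the generators $\delta_x,\delta_y$ into equality of cosets in $(G\times G^{op})/K$. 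This buys a single two-way equivalence at the cost of the coset bookkeeping; the paper's version avoids that bookkeeping entirely by outsourcing the forward direction to the already-proved corollary. Your converse computation ($xy$ and $yx$ differ by a central element, hence $\delta_x\delta_y(a)=xy\,a\,yx=yx\,a\,xy=\delta_y\delta_x(a)$) is essentially the paper's computation in different clothing. Both proofs are comparable in length and rest on the same earlier lemmas.

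One bookkeeping slip to fix: the second coordinate of $G\times G^{op}$ carries the opposite multiplication, so the element you must test for membership in $K$ is $(xyx\mo y\mo,\; y\mo x\mo yx)$, not $(xyx\mo y\mo,\; yxy\mo x\mo)$, and it is \emph{not} a formal identity that the second coordinate equals the inverse of the first (in $\S_3$ with $x=(123)$, $y=(12)$ they differ). The reduction nevertheless survives: coset equality still implies $xyx\mo y\mo\in\zen(G)$ by looking at the first coordinate alone; and conversely, if $c=xyx\mo y\mo\in\zen(G)$, then $xy=c\,yx$ gives $y\mo x\mo yx=(xy)\mo(yx)=c\mo$ and also $yxy\mo x\mo=(yx)(xy)\mo=c\mo$, so both $K$-conditions hold. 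With that one sentence added, your proof is complete.
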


\begin{proof}
If $Q$ is medial then $\dis(Q)$ is abelian and so $G/\zen(G)$ is abelian, according to Corollary \ref{G/Z(G) factor of dis}. On the other hand, if $G$ is $2-$step nilpotent then $[x,y]\in \zen(G)$ and so
\begin{align*}
[\delta_x,\delta_y](z)&=x\mo y\mo xy z y x y\mo  x\mo \\
&=[x,y] z y x y\mo  x\mo \\
&= z y x [x,y]y\mo  x\mo \\
&=z y x x\mo y\mo xy y\mo  x\mo =z
\end{align*}
for every $x,y,z\in G$. Thus $\dis(Q)$ is abelian and then $Q$ is medial. 
\end{proof}

Medial quandles are right distributive. The converse does not hold even in the class of core quandles.

\begin{example}\cite[Example 7.3]{Engel}
Let $p$ be an odd prime and $G$ be the group generated by $g_1, \ldots, g_7$ subject to the following relations:
\begin{align*}
g^p_i &= 1, i = 1,..., 7,\\
[g_1, g_2] &= g_4, [g_1, g_3] = g_5, [g_1, g_6] = g_7^2,\\
[g_2,g_3]&=g_6\mo , [g_2,g_5]=g_7\mo , [g_3,g_4]=g_7, 
\end{align*}
and $[g_i, g_j ]=1$ for all the other pairs $i,j$. The group $G$ has nilpotency class $3$ and it a $2$-Engel group. Thus, $\core(G)$ is right-distributive but not medial.
\end{example}

\section{Commutator theory} \label{sec: commutator theory}

In \cite{CP} we adapted commutator theory in the sense of \cite{comm} to racks and quandles. It turns out that {\it nilpotent} (resp. {\it solvable}) quandles are quandles with nilpotent (resp. solvable) displacement group.

\begin{corollary}
Let $G$ be a group and let $Q=\core(G)$. Then $Q$ is solvable (resp. nilpotent) if and only if $G$ is solvable (resp. nilpotent).
\end{corollary}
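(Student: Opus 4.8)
The plan is to leverage the two main structural facts already established: first, that nilpotence and solvability of a quandle are defined via the corresponding properties of $\dis(Q)$ (the result from \cite{CP} quoted just above the statement), and second, the tight relationship between $\dis(Q)$ and $G$ encoded in the earlier lemmas of Section \ref{sec dis}. Concretely, I would recall that by the injective homomorphism $\psi\colon\dis(Q)\hookrightarrow (G\times G^{op})/K$ established in that section, $\dis(Q)$ embeds into a quotient of $G\times G^{op}$, and that by Corollary \ref{G/Z(G) factor of dis} the group $G/\zen(G)$ is a homomorphic image of $\dis(Q)$. Since $G\times G^{op}\cong G\times G$ (via $\iota$ on the second factor), both nilpotence and solvability of $G$ pass to $G\times G$, then to its quotient $(G\times G^{op})/K$, and finally to the subgroup $\dis(Q)$; this gives the ``if'' direction.

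For the converse, I would argue that if $\dis(Q)$ is nilpotent (resp. solvable), then so is its homomorphic image $G/\zen(G)$ by Corollary \ref{G/Z(G) factor of dis}. Nilpotence of $G/\zen(G)$ together with centrality of the quotient subgroup $\zen(G)$ yields nilpotence of $G$ itself (a central extension of a nilpotent group by the actual center is nilpotent — one checks the upper central series of $G$ reaches $G$ in one more step than that of $G/\zen(G)$). For solvability the same central extension argument works a fortiori: a group that is an extension of a solvable group by an abelian (even central) group is solvable. Combining the two directions with the characterization ``$Q$ nilpotent $\iff$ $\dis(Q)$ nilpotent'' and likewise for solvable finishes the proof.

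The main subtlety, and the step I would be most careful about, is the nilpotent case of the converse: one must be sure that deducing nilpotence of $G$ from nilpotence of $G/\zen(G)$ genuinely uses that we quotient by the \emph{full} center and not merely by some central subgroup — the standard fact is that $G$ is nilpotent of class $c$ iff $G/\zen(G)$ is nilpotent of class $c-1$, so this goes through cleanly. A second point worth a sentence is that one should make explicit that the class of nilpotent (resp. solvable) groups is closed under subgroups, quotients, and finite direct products, which is what legitimizes transporting the property through the chain $G \rightsquigarrow G\times G^{op} \rightsquigarrow (G\times G^{op})/K \rightsquigarrow \dis(Q)$ in the forward direction. No new computation is needed; it is purely an assembly of the group-theoretic lemmas of Section \ref{sec dis} with the commutator-theoretic dictionary of \cite{CP}.
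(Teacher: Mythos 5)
Your proof is correct and follows essentially the same route as the paper: the forward direction transports solvability/nilpotence through $G\times G^{op}\twoheadrightarrow (G\times G^{op})/K\geq \dis(Q)$, and the converse uses Corollary \ref{G/Z(G) factor of dis} together with the central-extension argument, exactly as in the paper's proof. One minor remark: your cautionary note about needing the \emph{full} center is unnecessary (though harmless), since for any central subgroup $N\leq \zen(G)$ with $G/N$ nilpotent one gets $\gamma_{c+1}(G)\leq N\leq \zen(G)$ and hence $\gamma_{c+2}(G)=1$, so nilpotence lifts along any central quotient.
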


\begin{proof}
%
If $\dis(Q)$ is solvable (resp. nilpotent) then $G/\zen(G)$ is solvable (resp. nilpotent) according to Corollary \ref{G/Z(G) factor of dis}. So $G$ is also solvable (resp. nilpotent).

On the other hand $\dis(Q)$ embeds into a factor of $G\times G$ with respect to a central subgroup. So, if $G$ is solvable (resp. nilpotent) then also $\dis(Q)$ is solvable (resp. nilpotent).
\end{proof}

\begin{lemma}\label{semiregularity}
    Let $G$ be a group and let $Q=\core(G)$ be connected. The following are equivalent
    \begin{itemize}
        \item[(i)] $\dis(Q)$ is semiregular.  
        \item[(ii)] If $x_1\cdots x_t =(x_t\cdots x_1)\mo $ then $x_1\cdots x_t\in\zen(G)$.
    \end{itemize}
\end{lemma}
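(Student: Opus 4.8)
The plan is to unwind the definition of semiregularity through the previous lemmas. By Lemma \ref{dis_stab_1}, an element $\delta_{x_1}\cdots\delta_{x_t}$ lies in the stabilizer $\dis(Q)_1$ precisely when $x_1\cdots x_t = (x_t\cdots x_1)\mo$; and by Lemma \ref{lemma on action of dis}, such an element is trivial precisely when, in addition, $x_1\cdots x_t\in\zen(G)$. So condition (ii) says exactly that every element of $\dis(Q)_1$ is the identity, i.e. $\dis(Q)_1 = 1$. It therefore remains to show that, for a connected core quandle, $\dis(Q)$ is semiregular if and only if the single stabilizer $\dis(Q)_1$ is trivial.

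First I would note the easy direction: semiregularity by definition means $\dis(Q)_x = 1$ for every $x\in Q$, so in particular $\dis(Q)_1 = 1$, which gives (i) $\Rightarrow$ (ii). For the converse, I would use transitivity. Since $Q=\core(G)$ is connected, $\dis(Q)$ acts transitively on $Q$, so for any $x\in Q$ there is $g\in\dis(Q)$ with $g(1) = x$; then $\dis(Q)_x = g\,\dis(Q)_1\,g\mo$. Hence if $\dis(Q)_1 = 1$, all point stabilizers are trivial, and $\dis(Q)$ is semiregular. Combining this with the identification of the first paragraph yields the equivalence.

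The only genuinely delicate point is making sure the translation of condition (ii) into ``$\dis(Q)_1 = 1$'' is airtight: condition (ii) is phrased as an implication about products $x_1\cdots x_t$, and one must observe that every element of $\dis(Q)$ has the form $\delta_{x_1}\cdots\delta_{x_t}$ (by the generating set for $\dis(Q)$ recorded before Lemma \ref{dis_stab_1}), that membership in the stabilizer of $1$ is governed by Lemma \ref{dis_stab_1}, and that triviality of that element is governed by Lemma \ref{lemma on action of dis}; so the hypothesis of (ii) exactly picks out the elements of $\dis(Q)_1$ and its conclusion asserts they are all $1$. Everything else is the standard fact that, under a transitive action, semiregularity is equivalent to triviality of a single point stabilizer, which requires no computation. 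I do not anticipate a real obstacle here — the lemma is essentially a repackaging of the two preceding lemmas plus transitivity.

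\begin{proof}
Recall that $\dis(Q)=\langle\delta_x : x\in Q\rangle$, so every element of $\dis(Q)$ can be written as $\delta_{x_1}\cdots\delta_{x_t}$ for suitable $x_1,\ldots,x_t\in G$ and $t\in\mathbb{N}$. By Lemma \ref{dis_stab_1}, such an element belongs to $\dis(Q)_1$ if and only if $x_1\cdots x_t = (x_t\cdots x_1)\mo$, and by Lemma \ref{lemma on action of dis} it equals $1$ if and only if moreover $x_1\cdots x_t\in\zen(G)$. Therefore condition (ii) holds if and only if every element of $\dis(Q)_1$ is trivial, i.e. $\dis(Q)_1 = 1$.

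Now (i) $\Rightarrow$ (ii) is immediate, since semiregularity of $\dis(Q)$ means $\dis(Q)_x = 1$ for all $x\in Q$, in particular for $x=1$. For (ii) $\Rightarrow$ (i), assume $\dis(Q)_1 = 1$. Since $Q=\core(G)$ is connected, $\dis(Q)$ acts transitively on $Q$, so for each $x\in Q$ there is $g\in\dis(Q)$ with $g(1) = x$; then $\dis(Q)_x = g\,\dis(Q)_1\,g\mo = 1$. Hence $\dis(Q)$ is semiregular.
\end{proof}
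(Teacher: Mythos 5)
Your proof is correct and takes essentially the same route as the paper's: both reduce semiregularity to the triviality of the single stabilizer $\dis(Q)_1$ (the paper via homogeneity and conjugacy of stabilizers, you via transitivity of $\dis(Q)$ coming from connectedness) and then translate that condition using Lemma \ref{dis_stab_1}. The only cosmetic difference is that you invoke Lemma \ref{lemma on action of dis} directly where the paper repeats the corresponding computation inline.
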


\begin{proof}
The quandle $Q$ is homogeneous, so $\dis(Q)$ is semiregular if and only if $\dis(Q)_1$ (the stabilizers are conjugate). According to Lemma \ref{dis_stab_1},  $\delta_{x_1}\cdots\delta_{x_t}\in \dis(Q)_1$ if and only if $x_1\cdots x_t=(x_t\cdots x_1)\mo $.

    (i) $\implies$ (ii) Assume that $\dis(Q)$ is semiregular, then $\dis(Q)_1=\dis(Q)_a=1$ for all $a\in G$, that is $\delta_{x_1}\cdots\delta_{x_t}\in\dis(Q)_1$ implies that $\delta_{x_1}\cdots\delta_{x_t}=1$. So $\delta_{x_1}\cdots\delta_{x_t}(a)=(x_1\cdots x_t)a(x_t\cdots x_1)=(x_1\cdots x_t)a(x_1\cdots x_t)\mo =a$ for every $a\in G$. Thus, $x_1\cdots x_t\in \zen(G)$.

    (ii) $\implies$ (i): We need to show that $\dis(Q)_1=1$. Assume that $\delta_{x_1}\cdots \delta_{x_t}\in \dis(Q)_1$. Then $x_1\cdots x_t x_t\cdots x_1=1$ and so we have $x_1\cdots x_t\in \zen(G)$. Therefore $x_1\cdots x_t a x_t\cdots x_1 =x_1\cdots x_t a (x_1\cdots x_1)\mo  =a$ for every $a\in G$. Hence $\delta_{x_1}\cdots \delta_{x_t}=1$.
\end{proof}

\begin{proposition}\label{abelian cores}
Let $G$ be a group and $Q=\core(G)$. The following are equivalent
\begin{itemize}
\item[(i)] $Q$ is abelian.
\item[(ii)] $G$ is $2$-step nilpotent and if $x_1\cdots x_t =(x_t\cdots x_1)\mo $ then $x_1\cdots x_t\in\zen(G)$.
\end{itemize}
\end{proposition}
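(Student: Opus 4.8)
The plan is to recall from \cite{CP} (or the commutator-theoretic setup referenced in this section) that a quandle $Q$ is abelian precisely when $\dis(Q)$ is abelian \emph{and} acts semiregularly on $Q$ — equivalently, $\dis(Q)$ is regular on each orbit, so on a connected quandle $\dis(Q)$ is abelian and regular. I would first dispose of the degenerate case: if $Q$ is a projection quandle (equivalently $G$ is elementary $2$-abelian) then $Q$ is trivially abelian and both conditions in (ii) hold vacuously, so assume $Q$ is non-projection; but note that abelianness of $Q$ forces $\dis(Q)$ abelian, which via Corollary \ref{G/Z(G) factor of dis} gives $G/\zen(G)$ abelian, i.e. $G$ is $2$-step nilpotent. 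By Theorem \ref{core medial} this already tells us $Q$ is medial, hence connected-or-not we at least know $\dis(Q)$ is abelian whenever $G$ is $2$-step nilpotent (the computation $[\delta_x,\delta_y]=1$ in the proof of Theorem \ref{core medial}).

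For (i) $\Rightarrow$ (ii): assume $Q$ abelian. Then $\dis(Q)$ is abelian, so $G$ is $2$-step nilpotent as above. For the second clause, abelianness of $Q$ also forces $\dis(Q)$ to act semiregularly; when $Q$ is connected this is exactly condition (ii) of Lemma \ref{semiregularity}, giving: $x_1\cdots x_t=(x_t\cdots x_1)\mo$ implies $x_1\cdots x_t\in\zen(G)$. If $Q$ is \emph{not} connected one applies semiregularity of $\dis(Q)$ on the orbit of $1$ — but by Lemma \ref{dis_stab_1} the stabilizer $\dis(Q)_1$ depends only on products equalling their own reversed inverse, and semiregularity of the whole action forces $\dis(Q)_1=1$, which again yields the same implication via the argument in the proof of Lemma \ref{semiregularity}. (Here I should double-check that the homogeneity of $Q=\core(G)$ makes all stabilizers conjugate, so semiregularity on one orbit is semiregularity everywhere; this is used implicitly and is fine since $\lambda(G)\rho(G)\le\aut(Q)$ is transitive.)

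For (ii) $\Rightarrow$ (i): assume $G$ is $2$-step nilpotent and the stated implication holds. The computation in the proof of Theorem \ref{core medial} shows $[\delta_x,\delta_y]=1$ for all $x,y$, hence $\dis(Q)=\langle\delta_x:x\in G\rangle$ is abelian. The stated implication is condition (ii) of Lemma \ref{semiregularity}; combined with $\dis(Q)_1=\dis(Q)\cap\lmlt(G)$ from Lemma \ref{dis_stab_1} and the centrality conclusion, it shows $\dis(Q)_1=1$, and by homogeneity $\dis(Q)$ is semiregular. An abelian semiregular displacement group is exactly the criterion for $Q$ to be abelian in the sense of \cite{comm, CP}, so $Q$ is abelian. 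The main obstacle I anticipate is making the semiregularity step fully rigorous in the \emph{non-connected} case: Lemma \ref{semiregularity} is stated for connected $Q$, so I would need either to restrict attention to the orbit structure of $\core(G)$ (each orbit is a coset of $\langle s(G)\rangle$ by Proposition \ref{core orbits}, and $\dis(Q)$ acts the same way on each) or to give a short direct argument that $\delta_{x_1}\cdots\delta_{x_t}\in\dis(Q)_1$ together with the hypothesis forces $\delta_{x_1}\cdots\delta_{x_t}=1$, exactly as in the "(ii) $\Rightarrow$ (i)" half of Lemma \ref{semiregularity}, which does not actually use connectedness.
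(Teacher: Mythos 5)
Your proposal is correct and follows essentially the same route as the paper, whose proof simply combines the criterion of \cite[Theorem 1.1]{CP} (abelian $=$ medial $+$ semiregular displacement group) with Theorem \ref{core medial} and Lemma \ref{semiregularity}. Your extra care about the connectedness hypothesis is well placed: the paper invokes Lemma \ref{semiregularity} without comment, but, as you observe, its proof only uses homogeneity of $\core(G)$, so the argument goes through for arbitrary $G$.
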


\begin{proof}
A quandle $Q$ is abelian if and only if $\dis(Q)$ is abelian (that is $Q$ is medial) and semiregular \cite[Theorem 1.1]{CP}. Thus, we can put together Lemma \ref{semiregularity} and Theorem \ref{core medial}. 
\end{proof}






Affine quandles are abelian (see \cite[Theorems 2.2,2.3]{jedlivcka2018subquandles}). It is easy to see that if $G$ is abelian, then $\core(G)=\aff(G,-1)$ is affine. The converse does not hold. Indeed, the example in Remark \ref{not iso iff}, shows that $\core(\Z_3^2\rtimes \Z_3)$ is isomorphic to $\core(\Z_3^3)=\aff(\Z_3^3,-1)$.





\section{The Functor Core} \label{sec: functors}

The assignment $G\mapsto \core(G)$ gives rise to a functor from the category of groups to the category of involutory quandles. It is easy to construct the right adjoint of such functor. Let $Q$ be a quandle, we define the {\it $\core$-Adjoint group} of $Q$ by generators and relations as follows:
\begin{align}\label{presentation H(Q)}
\mathcal{H}(Q)=\langle e_x,\, x\in Q\, \colon\, e_{x*y}=e_x e_y\mo  e_x,\, x,y\in Q\rangle.
\end{align}

It is clear that the map 
\begin{align}   \label{map i}
i:Q\to \core(\mathcal{H}(Q)),\quad x\mapsto e_x
\end{align} is a quandle homomorphism.

\begin{proposition}\label{H(Q)}
Let $Q$ be an involutory quandle and $G$ be a group. If $f:Q\to \core(G)$ is a quandle homomorphism then there exists a group homomorphism $\widetilde{f}:\mathcal{H}(Q)\to G$ such that $f=\widetilde{f}\circ i$.
\end{proposition}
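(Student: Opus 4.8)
The plan is to verify the universal property of $\mathcal{H}(Q)$ directly from its presentation. Given a quandle homomorphism $f\colon Q\to\core(G)$, I would define $\widetilde f$ on the free generators $e_x$ by $\widetilde f(e_x)=f(x)$ and check that this assignment respects the defining relations of $\mathcal{H}(Q)$, so that it extends to a well-defined group homomorphism $\widetilde f\colon\mathcal{H}(Q)\to G$. Concretely, the relations of $\mathcal{H}(Q)$ are $e_{x*y}=e_xe_y\mo e_x$ for $x,y\in Q$; applying $\widetilde f$ to the right-hand side gives $f(x)f(y)\mo f(x)=f(x)*f(y)$ computed in $\core(G)$, which equals $f(x*y)$ since $f$ is a quandle homomorphism, and that is precisely $\widetilde f(e_{x*y})$. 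By von Dyck's theorem (the universal property of a group presentation), the map on generators therefore induces a unique group homomorphism $\widetilde f\colon\mathcal{H}(Q)\to G$.

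It then remains to check $f=\widetilde f\circ i$. Unwinding the definition of $i$ from \eqref{map i}, we have $(\widetilde f\circ i)(x)=\widetilde f(e_x)=f(x)$ for every $x\in Q$, which is exactly the required identity. Uniqueness of $\widetilde f$ is automatic: any group homomorphism $\mathcal{H}(Q)\to G$ satisfying $f=\widetilde f\circ i$ must send $e_x=i(x)$ to $f(x)$, and the $e_x$ generate $\mathcal{H}(Q)$.

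There is essentially no obstacle here: the only point requiring a moment's care is the bookkeeping that the relator computation uses both that $f$ preserves the operation and the explicit form $a*b=ab\mo a$ of the core operation on $G$; the involutory hypothesis on $Q$ is not actually needed for this direction (it is relevant for the fact that $\mathcal{H}$ is adjoint to $\core$ restricted to involutory quandles, i.e.\ that $i$ is a morphism in the right category), and I would simply note that. One could phrase the whole argument more abstractly by saying that $\mathcal{H}(Q)$ is defined so that group homomorphisms out of it correspond to set maps $Q\to G$ satisfying the core-compatibility conditions, and a quandle homomorphism $Q\to\core(G)$ is exactly such a map; but the explicit verification above is shorter to write down and self-contained.
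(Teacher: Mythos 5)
Your proof is correct and follows essentially the same route as the paper: define $\widetilde f$ on the generators $e_x$ by $f(x)$, check that the relations $e_{x*y}=e_xe_y\mo e_x$ are sent to the valid identity $f(x*y)=f(x)f(y)\mo f(x)$ in $G$, and conclude by the universal property of the presentation that $\widetilde f$ exists with $\widetilde f\circ i=f$. Your added remarks on uniqueness and on the involutory hypothesis being unnecessary for this direction are accurate but not needed for the statement itself.
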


\begin{proof}
Assume that $f:Q\to \core(G)$ is a quandle homomorphism. Then we have that $f(x*y)=f(x)f(y)\mo f(x)$. So, by definition of $\mathcal{H}(Q)$ (see \eqref{presentation H(Q)}) there exists a group homomorphism $\widetilde{f}$ mapping $e_x$ to $f(x)$. Hence $\widetilde{f}\circ i =f$. 
\end{proof}

Let $f:Q_1\to Q_2$ a morphism of quandles, then $i_2\circ f$ is a quandle morphisms into $\core(\mathcal{H}(Q_2))=$. Thus, according to Proposition \ref{H(Q)} tha map
$$H(f)=\widetilde{i_2\circ f}:\mathcal{H}(Q_1)\to \mathcal{H}(Q_1),\quad e_x\to e_{f(x)}$$
is a well defined group morphism. So $H(-)$ defines a functor from the category of quandles to the category of groups.

Moreover Proposition \ref{H(Q)} shows that the functor $\core(-)$ is right adjoint to $\mathcal{H}(-)$. Therefore, the pair of functors $(\core(-),\mathcal{H}(-))$ provides an adjunction between the category of groups and the category of involutory quandles (see \cite[Chapter IV]{mclane} for further details).

\begin{proposition}\label{i under lambda}
Let $Q$ be an involutory quandle. Then: 
\begin{enumerate}
\item $\ker{i}\leq \lambda_Q$.
\item The assignment $$\mathcal{H}(Q)\to \lmlt(Q),\quad e_x\to L_x$$
can be extended to a well-defined surjective group homomorphism.
\end{enumerate}
\end{proposition}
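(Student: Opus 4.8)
The plan is to establish part (ii) first, since part (i) then follows at once. Everything rests on the standard rack identity $L_{x*y} = L_x L_y L_x\mo$, valid because $L_x$ is an automorphism of $Q$ and hence conjugates $L_y$ to $L_{L_x(y)} = L_{x*y}$; since $Q$ is involutory we have $L_x\mo = L_x$, so this reads $L_{x*y} = L_x L_y\mo L_x$. This is exactly the shape of the defining relations of $\mathcal{H}(Q)$ in \eqref{presentation H(Q)}, which is what makes the asserted homomorphism exist.

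For (ii), I would let $F$ be the free group on the set $\setof{e_x}{x\in Q}$ and take the group homomorphism $F\to\lmlt(Q)$ determined by $e_x\mapsto L_x$ (available by the universal property of free groups). To see it factors through the quotient $\mathcal{H}(Q)$, it suffices to check that each relator $e_{x*y}\mo e_x e_y\mo e_x$ is sent to the identity; but it maps to $L_{x*y}\mo L_x L_y\mo L_x = L_{x*y}\mo L_{x*y} = 1$ by the identity just discussed. Hence there is a group homomorphism $\pi\colon\mathcal{H}(Q)\to\lmlt(Q)$ with $\pi(e_x)=L_x$. It is surjective because its image contains every $L_x$, and these generate $\lmlt(Q)$ by definition.

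Part (i) is then immediate: $\ker{i}$ is the kernel congruence of the quandle homomorphism $i$, i.e. the set of pairs $(x,y)$ with $e_x = e_y$ in $\mathcal{H}(Q)$. If $e_x=e_y$, applying $\pi$ gives $L_x = \pi(e_x)=\pi(e_y)=L_y$, which says precisely that $x\,\lambda_Q\,y$; thus $\ker{i}\leq\lambda_Q$. I do not expect a genuine obstacle here. The only points needing care are the role of the involutory hypothesis — in a general rack one only has $L_{x*y}=L_xL_yL_x\mo$ with a true inverse, and it is the identity $L_x=L_x\mo$ that makes this coincide with the relation $e_{x*y}=e_xe_y\mo e_x$ on the nose — and the observation that $i$ is a quandle, not a group, homomorphism, so "$\ker{i}$" must be read as a congruence on $Q$ rather than a normal subgroup.
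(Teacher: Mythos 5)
Your argument is correct and is essentially the proof in the paper: both rest on the identity $L_{x*y}=L_xL_y\mo L_x$ (i.e.\ that $x\mapsto L_x$ is a quandle morphism $Q\to\core(\lmlt(Q))$) together with the presentation of $\mathcal{H}(Q)$, and both deduce (i) by applying the resulting homomorphism to $e_x=e_y$. The only difference is cosmetic: the paper cites the universal property of Proposition \ref{H(Q)} applied to $x\mapsto L_x$, whereas you inline that argument by checking the relators in the free group directly.
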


\begin{proof}
The map: $f:x\mapsto L_x $ from $Q$ to $\core(\lmlt(Q))$ is a quandle morphism and the kernel is $\lambda_Q$. Therefore there exists a group morphism $\widetilde{f}:\mathcal{H}(Q)\to \lmlt(Q)$ such that $\widetilde{f}\circ i=f$ and so the second statement follows. Moreover, if $i(x)=i(y)$ then $\widetilde{f}(x)=L_x=L_y=\widetilde{f}(y)$, i.e $\ker{i}\leq \lambda_Q$. 
\end{proof}






Following the ideas of \cite{bergman2021core}, we provide several conditions describing when an involutory quandle can be embedded into the core of a group. The first one relies on the injetivity of the map $i$ defined in \eqref{map i}, and the latter is a necessary condition on the 2-generated subquandles of a core.

\begin{corollary}
Let $Q$ be an involutory quandle. Then $Q$ can be embedded into a core quandle if and only if $\ker{i}=0_Q$. 
\end{corollary}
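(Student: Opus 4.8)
The plan is to derive both directions directly from the universal property of $\mathcal{H}(Q)$ established in Proposition \ref{H(Q)}, so that the argument is essentially a formal consequence of the adjunction $(\core(-),\mathcal{H}(-))$ together with the observation that for a quandle homomorphism the kernel congruence is trivial precisely when the map is injective.

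For the ``if'' direction, suppose $\ker{i}=0_Q$. Since $i\colon Q\to\core(\mathcal{H}(Q))$ is a quandle homomorphism whose kernel congruence is the identity relation, $i$ is injective, hence an embedding of $Q$ into the core quandle $\core(\mathcal{H}(Q))$. This already exhibits $Q$ as a subquandle of a core.

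For the ``only if'' direction, assume $Q$ embeds into a core quandle, i.e. there is a group $G$ and an injective quandle homomorphism $f\colon Q\to\core(G)$. By Proposition \ref{H(Q)} there is a group homomorphism $\widetilde{f}\colon\mathcal{H}(Q)\to G$ with $f=\widetilde{f}\circ i$. If $x\,\ker{i}\,y$, i.e. $i(x)=i(y)$, then $f(x)=\widetilde{f}(i(x))=\widetilde{f}(i(y))=f(y)$, and injectivity of $f$ forces $x=y$. Hence $\ker{i}=0_Q$.

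There is no real obstacle here beyond unwinding the notation: the only point to make explicit is that ``$\ker{i}=0_Q$'' is exactly the statement that $i$ is injective, after which the ``if'' direction is immediate from the definition of $i$ and the ``only if'' direction is immediate from the factorization $f=\widetilde f\circ i$ of Proposition \ref{H(Q)}.
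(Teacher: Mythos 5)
Your proof is correct and follows essentially the same route as the paper: the ``if'' direction uses $i$ itself as the embedding into $\core(\mathcal{H}(Q))$, and the ``only if'' direction factors an embedding $f=\widetilde f\circ i$ via Proposition \ref{H(Q)} to conclude $i$ is injective. You merely spell out the factorization argument a bit more explicitly than the paper does.
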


\begin{proof}
If $\ker{i}=0_Q$ then $Q$ embeds into $\core(\mathcal{H}(Q))$. On the other hand if $f:Q\to \core(G)$ is injective, we have that there exists a group morphism $\widetilde{f}:\mathcal{H}(Q)\to G$ with $f=\widetilde{f}\circ i$. Therefore $i$ needs to be injective.
\end{proof}

Note that, if $Q$ is a faithful involutory quandle, then the map $x\mapsto L_x$ is an embedding of $Q$ into $\core(\lmlt(Q))$ (see Proposition \ref{i under lambda}(i)).

\begin{lemma} \label{thm:lemma_LxLy}
    In $Q=\core(G)$, let $x,y\in G$, and let $S=Sg(x,y)$. Then
    \begin{enumerate}
        \item $(L_xL_y)^i(x)=x(y\mo x)^{2i}$, for all $i\in\N$.
        \item $(L_xL_y)^i(y)=y(y\mo x)^{2i}$, for all $i\in\N$.
        \item $|{L_xL_y|}_{S}|=|(y\mo x)^2|=|x^{L_x L_y}|=|y^{L_x L_y}|$.
    \end{enumerate}
\end{lemma}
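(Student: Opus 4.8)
The plan is to prove all three parts by a direct computation in the group $G$, exploiting the identity $L_x(z) = xz\mo x$ and the presence of the convenient ``multiplicative'' reformulation $L_x(z) = z(z\mo x)^2$ that already appears in the proof of Proposition \ref{core orbits}. First I would establish (i) by induction on $i$. The base case $i=0$ is trivial. For the inductive step, suppose $(L_xL_y)^i(x) = x(y\mo x)^{2i}$. Writing $w = (y\mo x)^{2i}$, I compute
\begin{align*}
(L_xL_y)^{i+1}(x) &= L_x L_y\big(x w\big) = L_x\big(y (xw)\mo y\big) = L_x\big(y w\mo x\mo y\big)\\
&= x\,(y w\mo x\mo y)\mo x = x\, y\mo x w y\mo x = x (y\mo x)\, w\, (y\mo x) = x (y\mo x)^{2i+2},
\end{align*}
using that $w = (y\mo x)^{2i}$ is a power of $y\mo x$ and hence commutes with $y\mo x$. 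This gives (i). Part (ii) is entirely symmetric: the same induction works verbatim with the roles of the two ``seeds'' adjusted, since $L_y(y) = y$ makes the base case work and the inductive step only ever uses that the accumulated factor is a power of $y\mo x$, not that we started at $x$ — indeed one checks $(L_xL_y)(y) = L_x(y) = xy\mo x = y(y\mo x)^2$ directly, and then the induction proceeds identically.

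For part (iii), the restriction $L_xL_y|_S$ makes sense because $S = Sg(x,y)$ is a subquandle (being generated by $x,y$ it is closed under $*$ and $\backslash$), and both $L_x$ and $L_y$ restrict to automorphisms of $S$. The order of $L_xL_y|_S$ is the least $n\ge 1$ with $(L_xL_y)^n$ fixing every element of $S$; since $S$ is generated as a quandle by $x$ and $y$ and $L_xL_y|_S$ is an automorphism of $S$, it suffices that $(L_xL_y)^n$ fix both $x$ and $y$. By parts (i) and (ii), $(L_xL_y)^n(x) = x$ iff $(y\mo x)^{2n} = 1$, and likewise for $y$; so $|L_xL_y|_S| = \min\{n\ge 1 : (y\mo x)^{2n}=1\} = |(y\mo x)^2|$ (interpreting both sides as $\infty$ if no such $n$ exists). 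Finally, the orbit size $|x^{L_xL_y}|$ equals the order of $L_xL_y$ acting on that orbit, which by (i) is again the least $n$ with $(y\mo x)^{2n}=1$, hence $|(y\mo x)^2|$; and symmetrically for $|y^{L_xL_y}|$ by (ii). This chains the three quantities together.

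The only slightly delicate point is the reduction in (iii) from ``fixes all of $S$'' to ``fixes $x$ and $y$'': I would justify it by noting that the fixed-point set of the automorphism $(L_xL_y)^n$ of $S$ is a subquandle of $S$ containing $\{x,y\}$, hence equals $S$ as soon as it contains $x$ and $y$. Everything else is the routine group-theoretic bookkeeping sketched above, and the commuting-powers observation ($w$ is a power of $y\mo x$) is what makes the induction collapse so cleanly.
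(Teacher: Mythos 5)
Your proof is correct and takes essentially the same route as the paper: (i) and (ii) by direct computation/induction, and (iii) by reducing the order of $L_xL_y|_S$ to its effect on the generators $x,y$ — your remark that the fixed-point set of $(L_xL_y)^n$ is a subquandle of $S$ containing $x$ and $y$ is precisely the justification behind the paper's appeal to the orbit lengths of the generators.
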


\begin{proof}
    Items (i) and (ii) follow by a direct computation. The order of the automorphism $L_x L_y|_S$ equal the least common multiple of the lengths of the orbits of the generators of $S$. So (iii) follows from the statements (i) and (ii). 
\end{proof}

\begin{lemma}
    Let $Q=\core(G)$ and let $x,y\in G$. Then $Sg(x,y)$ is affine (hence abelian).
\end{lemma}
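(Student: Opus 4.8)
The plan is to exploit the subgroup $S = Sg(x,y) \le G$ together with Lemma~\ref{thm:lemma_LxLy}. First I would observe that the subquandle $Sg(x,y)$ of $Q = \core(G)$ is contained in the core of the subgroup $S$ generated by $x$ and $y$: since $x*y = xy\mo x$ stays inside $S$, the subquandle generated by $x,y$ equals $\core(S)$ restricted to whatever set it generates, but in fact I want the \emph{subquandle}, not all of $\core(S)$. The key point from Lemma~\ref{thm:lemma_LxLy}(iii) is that the automorphism $L_xL_y$, restricted to $S$ (hence to $Sg(x,y)$), has finite order equal to $|(y\mo x)^2|$ — call this $n$ — and it acts on the two generators $x,y$ by $x \mapsto x (y\mo x)^{2}$ and $y\mapsto y(y\mo x)^2$, i.e.\ by \emph{right multiplication by the single central-looking element} $c := (y\mo x)^2$.

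Next I would set $H = \langle c \rangle = \langle (y\mo x)^2\rangle$, a cyclic group, and check that $H$ is normalized appropriately so that $Sg(x,y) \subseteq xH \cup yH$. Indeed Lemma~\ref{thm:lemma_LxLy}(i)--(ii) shows the $\langle L_xL_y\rangle$-orbits of $x$ and $y$ are $xH$ and $yH$ respectively, and applying the single remaining generator $L_x$ (an involution) of $\lmlt$ on these two generators — using $L_x(x) = x$ and computing $L_x(y) = xy\mo x = x(y\mo x) = $ an element of the coset structure — one sees the whole subquandle $Sg(x,y)$ lies in the union of these two cosets of the cyclic (hence abelian) group $H$. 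The cleanest way to package this: produce an explicit abelian group $A$ and automorphism $\psi$ so that $Sg(x,y)$ is (isomorphic to a subquandle of) $\aff(A,\psi)$, or more directly, invoke the earlier homogeneous-representation machinery: $\lmlt(Sg(x,y))$ is generated by $L_x$ and $L_xL_y$, the latter of finite order $n$, the former an involution inverting it, so $\lmlt(Sg(x,y))$ is a quotient of a dihedral group, hence metacyclic; but for affineness I need the stronger statement that $\dis$ of the subquandle is cyclic and acts the right way.

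The technically delicate step — and the main obstacle — is upgrading "$Sg(x,y)$ sits inside two cosets of a cyclic group" to "$Sg(x,y)$ is affine," because affineness requires exhibiting an abelian group structure on the underlying set with the quandle operation equal to $a * b = \psi(a) + b - \psi(b)$ for a suitable automorphism $\psi$ of order dividing $2$... but actually the affine structure here should come from the cyclic group $\langle z\rangle$ where $z = y\mo x$: on the set $\{x z^{2i}\} \cup \{y z^{2i}\}$ one can transport the group structure from $\langle z\rangle$ via $x z^k \leftrightarrow z^k$, $y z^k \leftrightarrow z^{k+j}$ for the appropriate shift $j$ with $y\mo x = z$, and verify that $*$ becomes the operation of $\core$ of that (abelian) cyclic group. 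Concretely I would argue: the map $Sg(x,y) \to \core(\langle z\rangle)$ sending $w \mapsto x\mo w$ is well-defined (lands in $\langle z\rangle$ since everything is a product of $x$'s and $y$'s reduced modulo the relations, using $y = xz\mo$... one must check $x\mo Sg(x,y) \subseteq \langle x\mo y\rangle$, which needs $\langle x\mo y\rangle \trianglelefteq S$ — this is where the work is) and is a quandle embedding into $\core(\langle z\rangle) = \aff(\langle z\rangle, -1)$, which is affine; a subquandle of an affine quandle over a cyclic group is again affine by \cite[Theorems 2.2, 2.3]{jedlivcka2018subquandles}. So the crux is the normality claim $\langle x\mo y \rangle \trianglelefteq Sg(x,y)$, equivalently that $S$ is generated by the single element $z = x\mo y$ up to the coset of $x$ — I would prove this by showing every word in $x^{\pm1}, y^{\pm1}$ equals $x^{\epsilon} z^k$ for $\epsilon \in \{0,1\}$ (or more carefully, lies in $\langle z\rangle \cup x\langle z\rangle$) via the identities $xz = y$, $zx = x(x\mo y x\mo)\cdots$, tracking that conjugation by $x$ sends $z$ to some power of $z$ — which in turn follows because $L_xL_y$ has finite order $n$ on $S$, forcing $x z^2 x\mo \in \langle z \rangle$, and $x z x\mo$ then lies in $\langle z\rangle$ after a parity check.
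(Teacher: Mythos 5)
Your overall strategy --- trapping $Sg(x,y)$ inside cosets of the cyclic group generated by $z=y\mo x$ and transporting the structure to $\aff(C,-1)$ with $C$ cyclic --- is essentially the paper's, but the step you single out as the crux is a genuine gap. You claim that well-definedness of $w\mapsto x\mo w$ requires $\langle x\mo y\rangle$ to be normal in the group $\langle x,y\rangle$, and you propose to deduce this from ``the finite order of $L_xL_y|_S$ forces $xz^2x\mo\in\langle z\rangle$''. Both claims are false in general: in $\S_4$ take $x=(1\,2)$ and $y=(1\,2\,3\,4)$; then $z=y\mo x$ is a $3$-cycle, so $L_xL_y|_S$ has order $3$, yet $xzx\mo$ and $xz^2x\mo$ are $3$-cycles outside $\langle z\rangle$, and $\langle z\rangle$ is not normal in $\langle x,y\rangle=\S_4$ (the free group on $x,y$ also gives a counterexample, and there the order is infinite, a case your argument tacitly excludes anyway). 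Fortunately normality is not needed. The containment you actually use, $Sg(x,y)\subseteq x\langle y\mo x\rangle$, already follows from the orbit description, provided that description is properly justified: you need $\dis(S)=\langle L_xL_y|_S\rangle$ and $S=x^{\dis(S)}\cup y^{\dis(S)}$, which the paper imports from \cite[Lemma 3.1]{Super} and \cite[Corollary 5.5]{semimedial}; computing the action of $L_x$ and $L_xL_y$ on the two generators alone does not bound the whole subquandle. Even more directly: $\lambda_{x\mo}$ is an automorphism of $\core(G)$ (beginning of Section \ref{sec: hom rep}), so it carries $Sg(x,y)$ isomorphically onto $Sg(1,x\mo y)$, which is contained in (in fact equal to) the subquandle $\langle x\mo y\rangle$ of $\core(G)$; hence $Sg(x,y)\cong\core(\langle x\mo y\rangle)=\aff(\langle x\mo y\rangle,-1)$, with no normality, no finiteness assumption and no case distinction.

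If you insist on the two-coset transport instead, three further repairs are needed. First, the case where $L_xL_y|_S$ has infinite order must be included (the paper takes $C=\Z$ there). Second, the assertion that a subquandle of an affine quandle over a cyclic group is affine is not what \cite[Theorems 2.2,2.3]{jedlivcka2018subquandles} say (they give that affine quandles are abelian), so that step needs its own argument --- harmless here only because your image is $2$-generated, indeed all of $\core(\langle z\rangle)$. Third, the bookkeeping $xz^{k}\leftrightarrow z^{k}$, $yz^{k}\leftrightarrow z^{k+j}$ presupposes that $x\langle z^2\rangle$ and $y\langle z^2\rangle$ are disjoint, which fails exactly when $|z|$ is odd; the paper handles that situation separately as the connected case, citing that $2$-generated connected involutory quandles are affine (\cite[Proposition 3.1]{Super}).
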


\begin{proof}
    Let $S=Sg(x,y)$. According to \cite[Lemma 3.1]{Super}, $\dis(S)=\langle L_x L_y\rangle$ and $S=x^{\dis(S)}\cup y^{\dis(S)}$ by virtue of \cite[Corollary 5.5]{semimedial}. Then according to Lemma \ref{thm:lemma_LxLy} $S=\setof{x(y\mo x)^{2i}, \, y(y\mo x)^{2i}}{i\in \mathbb{N}}$. 
    
    If $S$ is connected, then $S$ is affine. Assume $S$ not connected, let $C=\Z_{2n}$ if $n=|{L_xL_y|}_{S}|$ or $C=\Z$ if the order of $L_x L_y$ is infinite. The map:
    \[
    \begin{aligned}
        \varphi\colon &S \to \aff(C,-1) \\
        & x(y\mo x)^{2i} \mapsto 2i \\
        & y(y\mo x)^{2j} \mapsto 1+2j
    \end{aligned}
    \]
    is a quandle isomorphism, hence $S$ is affine. 
\end{proof}

\begin{corollary} \label{cor:2_gen_are_affine}
Let $Q$ be an involutory quandle. If $Q$ embeds into a core quandle, then $Sg(x,y)$ is affine for every $x,y\in Q$.
\end{corollary}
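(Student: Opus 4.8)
The plan is to combine the immediately preceding lemma with the functorial/embedding machinery already developed. The key observation is that affineness of a two-generated subquandle is an \emph{intrinsic} property of that subquandle, so if $Q$ embeds into some $\core(G)$ via an injective quandle homomorphism $f$, then for any $x,y\in Q$ the restriction of $f$ maps $Sg(x,y)$ isomorphically onto a subquandle of $\core(G)$, and that image is precisely $Sg(f(x),f(y))$ since homomorphisms preserve subquandle generation.

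Concretely, first I would note that any quandle homomorphism carries generators to generators, so $f(Sg(x,y)) = Sg(f(x),f(y))$ as a subquandle of $\core(G)$. Since $f$ is injective, its restriction to $Sg(x,y)$ is a quandle isomorphism onto $Sg(f(x),f(y))$. Second, I would apply the preceding lemma to the group $G$ and the elements $f(x),f(y)\in G$: it tells us that $Sg(f(x),f(y))$ is affine. Third, since being affine (isomorphic to $\aff(C,-1)$ for an abelian group $C$) is preserved under quandle isomorphism, $Sg(x,y)$ itself is affine. This completes the argument.

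The proof is essentially a one-line deduction, so there is no real obstacle; the only point requiring a sentence of care is the claim $f(Sg(x,y)) = Sg(f(x),f(y))$, which follows from the fact that the subquandle generated by a set $T$ is the smallest subquandle containing $T$, and the image under a homomorphism of a subquandle is a subquandle, hence $f(Sg(x,y))$ is a subquandle containing $f(x),f(y)$ and contained in any subquandle containing them (pull back along $f$), giving the equality. I would write this out as follows.

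\begin{proof}
Let $f\colon Q\to \core(G)$ be an injective quandle homomorphism, and fix $x,y\in Q$. Since the image of a subquandle under a quandle homomorphism is again a subquandle, and $f$ sends a generating set of $Sg(x,y)$ to $\{f(x),f(y)\}$, we have $f(Sg(x,y))=Sg(f(x),f(y))$ as subquandles of $\core(G)$. As $f$ is injective, its restriction is a quandle isomorphism $Sg(x,y)\cong Sg(f(x),f(y))$. By the previous lemma, $Sg(f(x),f(y))$ is affine, and affineness is preserved under quandle isomorphism. Hence $Sg(x,y)$ is affine.
\end{proof}
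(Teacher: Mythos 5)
Your proof is correct and matches the paper's intent: the corollary is stated in the paper as an immediate consequence of the preceding lemma (no separate proof is given), and your argument—transporting $Sg(x,y)$ isomorphically onto $Sg(f(x),f(y))\subseteq\core(G)$ and invoking that lemma—is exactly the deduction being relied upon. The care you take with $f(Sg(x,y))=Sg(f(x),f(y))$ and the invariance of affineness under isomorphism is appropriate and complete.
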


The converse of Corollary \ref{cor:2_gen_are_affine} does not hold. According to \cite[Proposition 3.1]{Super}, $2$-generated connected involutory quandles are affine. Let $Q$ be the involutory quandle \texttt{SmallQuandle(15,6)} of the RIG database. The quandle $Q$ is latin and so its $2$-generated subquandles are affine. On the other hand, $Q$ is not isomorphic to the core of any of the groups of order 15. 


\section{Coloring by core quandles} \label{sec: coloring} 
Let $\mathcal{K}$ be a knot and $Q$ a quandle. A \emph{quandle coloring} of $\mathcal{K}$ by $Q$ is an assignment of elements of $Q$ to the arcs of $\mathcal{K}$ such that the \emph{crossing relation} as shown in Figure \ref{fig:crossing} holds in $Q$ (see \cite{fish2015combinatorial} and \cite{fish2016efficient} for a complete presentation of the theory of knots and colorings).

\begin{figure}[ht]
    \centering
    \includegraphics[width=.5\linewidth]{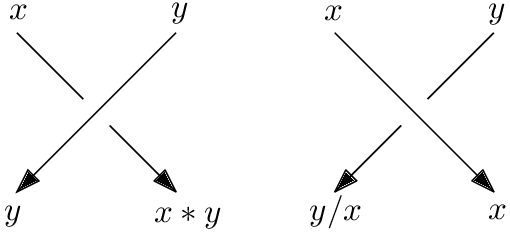}
    \caption{Crossing relation.}
    \label{fig:crossing}
\end{figure}

These crossing relations ensure that the number of colorings by a given quandle is invariant under the Reidemeister moves, making it a topological invariant of the knot. A \emph{trivial coloring} of a knot diagram using a quandle $Q$ is a quandle coloring in which the same element of $Q$ is assigned to all arcs of the diagram. We say that a knot is {\it colorable} by a quandle $Q$ if the knot admits a non-trivial coloring by $Q$.

Given a diagram of a knot $\mathcal{K}$, its knot quandle $\mathcal{Q}(\mathcal{K})$ is the free quandle on the arc-set of $\mathcal{K}$ modulo the equivalence relations generated by the crossing relations. A coloring $c$ of a knot $\mathcal{K}$ using a quandle $Q$ can be seen as a quandle homomorphism $c:\mathcal{Q}(\mathcal{K})\to Q$.

Let $Q$ be a quandle. Following \cite[Section 2.2]{Orbits} we define
\begin{align*}
    L_0(Q),\qquad L_{n+1}(Q)=L_n(Q)/\lambda_{L_n(Q)}, \quad \text{ for every } n\in \mathbb{N}.
\end{align*}
    
\begin{lemma}\label{L_n}
    Let $Q$ be a finite connected quandle. Then there exists $n\in \mathbb{N}$ such that $L_n(Q)$ is faithful and $|L_n(Q)|>1$.
\end{lemma}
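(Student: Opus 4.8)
The plan is to track what happens to the size of a finite connected quandle $Q$ under repeated application of the operation $L_{n+1}(Q) = L_n(Q)/\lambda_{L_n(Q)}$. First I would observe that each $L_n(Q)$ is again a finite connected quandle: connectedness and finiteness are preserved under quotients, and $\lambda_{L_n(Q)}$ is a genuine quandle congruence, so this is an honest sequence of finite connected quandles with $|L_{n+1}(Q)| \le |L_n(Q)|$. Since the sequence of cardinalities is non-increasing in $\N$, it must stabilize: there exists $n$ with $|L_{n+1}(Q)| = |L_n(Q)|$. For such an $n$, the quotient map $L_n(Q) \to L_n(Q)/\lambda_{L_n(Q)}$ is a bijective quandle homomorphism, hence an isomorphism, which forces $\lambda_{L_n(Q)}$ to be the identity relation, i.e. $L_n(Q)$ is faithful. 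This handles the ``faithful'' half.

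For the size bound $|L_n(Q)| > 1$, the key point is that a connected quandle on a one-element set is the trivial quandle, and the only way the sequence can collapse to size $1$ is if at some stage $L_m(Q)$ is a projection quandle (so that $\lambda_{L_m(Q)}$ is the full relation). So I would argue that a finite connected quandle with more than one element is never a projection quandle: if $Q$ is projection then every $L_x$ is the identity, so $\lmlt(Q) = 1$, which acts transitively on $Q$ only when $|Q| = 1$. Hence as long as we start with $|Q| > 1$ and connected, every $L_n(Q)$ has $|L_n(Q)| > 1$: none of them is projection, so passing to the quotient by the Cayley kernel never identifies two distinct elements into everything — more precisely, $\lambda_{L_n(Q)}$ being the full congruence would mean all left translations coincide with $L_x L_x^{-1}$-type maps... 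I should phrase this cleanly as: if $|L_n(Q)| > 1$ and $L_n(Q)$ is connected, then $L_n(Q)$ is not projection, hence $\lambda_{L_n(Q)} \ne L_n(Q)\times L_n(Q)$, hence $|L_{n+1}(Q)| > 1$. Then induction from $|Q| = |L_0(Q)| > 1$ gives $|L_n(Q)| > 1$ for all $n$, and in particular for the stabilizing $n$ found above.

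Combining the two halves: take $n$ to be the first index where the cardinality stabilizes; then $L_n(Q)$ is faithful by the isomorphism argument, and $|L_n(Q)| > 1$ by the induction. The main obstacle I anticipate is purely bookkeeping: making sure that $\lambda_{L_n(Q)}$ really is a congruence (so that the quotient is a quandle and the recursion is well-defined) and that connectedness genuinely descends to quotients — both are standard but should be cited or checked, and the paper's earlier material on the Cayley kernel (it is introduced as a congruence in Section \ref{properties}) together with the fact that homomorphic images of connected quandles are connected covers exactly this. No delicate estimate is needed; the whole argument is a stabilization-of-a-decreasing-sequence argument plus the observation that connected $\ne$ projection in size $> 1$.
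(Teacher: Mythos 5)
Your proposal is correct and follows essentially the same route as the paper: the non-increasing sequence of cardinalities stabilizes, a size-preserving quotient by the Cayley kernel forces faithfulness, and the fact that a connected projection quandle is trivial (phrased in the paper as a minimal-counterexample argument rather than your induction) guarantees $|L_n(Q)|>1$. No substantive difference.
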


\begin{proof}
    Assume that $m$ is the smallest integer such that $|L_m(Q)|=1$. Then $L_{m-1}(Q)$ is connected and projection factor of $Q$, so $|L_{m-1}(Q)|=1$, contradiction. Therefore $|L_n(Q)|>1$ for every $n\in \mathbb{N}$. Since $|L_{k+1}(Q)|\leq |L_k(Q)|$ for every $k\in \mathbb{N}$, there exists $n\in \mathbb{N}$ such that $|L_n(Q)|=|L_{n+1}(Q)|$, that is $L_n(Q)$ is faithful.
\end{proof} 

\begin{lemma}\label{coloring by Cayley}
Let $\mathcal{K}$ be a knot and $Q$ be a quandle. If $\mathcal{K}$ is colorable by $Q$ then $\mathcal{K}$ is colorable by $L_n(Q)$ for every $n\in \mathbb{N}$.
\end{lemma}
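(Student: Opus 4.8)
The plan is to induct on $n$, exploiting that $L_{n+1}(Q)=L_1(L_n(Q))$, so that the whole statement reduces to the single claim: \emph{if $\mathcal{K}$ is colorable by a quandle $P$, then $\mathcal{K}$ is colorable by $P/\lambda_P$.} The base case $n=0$ is the hypothesis, since $L_0(Q)=Q$; granting the claim, applying it successively to $Q,L_1(Q),L_2(Q),\dots$ gives colorability by $L_n(Q)$ for all $n$.

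To prove the claim, fix a non-trivial coloring $c\colon\mathcal{Q}(\mathcal{K})\to P$. Since the Cayley kernel $\lambda_P$ is a congruence, the quotient map $\pi\colon P\to P/\lambda_P$ is a quandle homomorphism, and hence $\pi\circ c\colon\mathcal{Q}(\mathcal{K})\to P/\lambda_P$ is again a coloring of $\mathcal{K}$. The only point to verify is that $\pi\circ c$ is non-trivial. Suppose it is not: then $c(a)\,\lambda_P\,c(b)$ for every pair of arcs $a,b$, i.e. $L_{c(a)}=L_{c(b)}$; call this common permutation $L\in\sym(P)$. Using the rack identity $L_{x*y}=L_xL_yL_x\mo$ (and its counterpart for the left division), the set $T=\setof{q\in P}{L_q=L}$ is a subquandle of $P$; it contains every arc-color, hence contains the image $c(\mathcal{Q}(\mathcal{K}))$. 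On the other hand, idempotence gives $q=L_q(q)=L(q)$ for every $q\in T$, so $L$ restricts to the identity on $T$, and therefore $T$ — and with it $c(\mathcal{Q}(\mathcal{K}))$ — is a projection quandle. Since the knot quandle $\mathcal{Q}(\mathcal{K})$ is connected, its homomorphic image $c(\mathcal{Q}(\mathcal{K}))$ is a connected projection quandle, hence a single element; that is, $c$ is trivial, a contradiction. (Equivalently one can argue directly on a diagram of $\mathcal{K}$: traversing the knot, the colors of two consecutive arcs differ by applying $L^{\pm1}$, which by the computation above fixes every arc-color, so all arcs receive the same color.) Thus $\pi\circ c$ is non-trivial, which proves the claim and the lemma.

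The genuinely non-formal step is the preservation of non-triviality in the paragraph above: the composition $\pi\circ c$ is automatically a coloring, but showing it is not trivial requires the input that a knot admits no non-trivial coloring by a projection quandle (equivalently, that the knot quandle is connected and a connected projection quandle is a point). Everything else — the induction, the fact that $\lambda_P$ yields a quotient quandle, and the rack-identity bookkeeping showing $T$ is a projection subquandle — is routine.
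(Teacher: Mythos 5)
Your proof is correct and follows essentially the same route as the paper: reduce to the single step $P\mapsto P/\lambda_P$ via $L_{n+1}(Q)=L_1(L_n(Q))$, compose the coloring with the quotient map, and rule out triviality by observing that a trivial induced coloring would force $\im(c)$ to be a connected projection quandle, hence a single point. Your extra bookkeeping with the subquandle $T$ and the explicit appeal to connectedness of the knot quandle just spell out details the paper leaves implicit, so there is no substantive difference.
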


\begin{proof}
It is enough to prove the statement for $n=1$ (because $L_{n+1}(Q)=L_1 (L_n(Q))$).

Let $Q_{\mathcal{K}}$ be the quandle knot of $\mathcal{K}$. Assume that $c:Q_{\mathcal{K}} \to Q$ is a coloring. The map $$c':Q_{\mathcal{K}}\to Q\to Q/\lambda_Q,\quad x\mapsto [c(x)]_{\lambda_Q}$$ is a quandle coloring. Assume that the image is trivial, that is $x\,\lambda_Q\,y$ for every $x,y\in \im(c)$. Then $\im(c)$ is a projection and connected quandle. Hence $|\!\im(c)|=1$, contradiction. 
\end{proof}

\begin{corollary}
Let $\mathcal{K}$ be a knot. If $\mathcal{K}$ can be colored by a finite quandle, then $\mathcal{K}$ can be colored by a faithful quandle.
\end{corollary}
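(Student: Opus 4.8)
The plan is to chain together the two lemmas immediately preceding this corollary. Suppose $\mathcal{K}$ admits a non-trivial coloring by a finite quandle $P$. The image of such a coloring is itself a finite quandle, so without loss of generality I may assume $P$ is finite and that $\mathcal{K}$ is colorable by $P$ in the sense defined above. The subtlety is that $P$ need not be connected, whereas Lemma \ref{L_n} and the argument of Lemma \ref{coloring by Cayley} are phrased for connected quandles; so the first step is to pass to a connected subquandle carrying the non-triviality.

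First I would observe that a non-trivial coloring $c:\mathcal{Q}(\mathcal{K})\to P$ has image contained in a single orbit of $\lmlt(P)$: since $\mathcal{K}$ is a knot (a single connected component), the arcs of a diagram are linked by crossing relations into one $\lmlt$-orbit under the coloring, so $\im(c)$ lies in one orbit $O$ of $P$, which is a connected subquandle. Moreover $|O|>1$ because $c$ is non-trivial. Replacing $P$ by $O$, I may assume $P$ is finite, connected, with $|P|>1$, and $\mathcal{K}$ is colorable by $P$.

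Now apply Lemma \ref{L_n} to $P$: there is an $n\in\mathbb{N}$ with $L_n(P)$ faithful and $|L_n(P)|>1$. By Lemma \ref{coloring by Cayley} (iterated $n$ times, as noted in its proof since $L_{k+1}(P)=L_1(L_k(P))$), $\mathcal{K}$ is colorable by $L_n(P)$. Since $|L_n(P)|>1$, this coloring is non-trivial, and $L_n(P)$ is faithful. Hence $\mathcal{K}$ can be colored by a finite faithful quandle, which is the claim.

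The main obstacle is the connectedness reduction at the start: Lemmas \ref{L_n} and \ref{coloring by Cayley} both genuinely use connectedness (to force a connected projection quandle to be a singleton), so one must first localize the non-trivial coloring to a connected subquandle. The argument that the image of a knot coloring lies in one $\lmlt$-orbit is standard — it is the same fact underlying the proof of Lemma \ref{coloring by Cayley} (\enquote{$\im(c)$ is a connected quandle}) — so once that is invoked the rest is a direct concatenation of the two preceding lemmas with no further computation.
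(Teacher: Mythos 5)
Your overall strategy is the same as the paper's: reduce to a finite connected quandle with more than one element, then chain Lemma \ref{L_n} with (iterations of) Lemma \ref{coloring by Cayley}. The gap is in how you justify the reduction. You replace $P$ by the orbit $O$ of $\lmlt(P)$ containing $\im(c)$ and assert that $O$ is a \emph{connected} subquandle; that assertion is false in general. An orbit of $\lmlt(P)$ is indeed a subquandle, but its own left multiplication group is generated only by the translations $L_x$ with $x\in O$, not by all of $\lmlt(P)$, so it need not act transitively on $O$. A counterexample inside the paper's own setting: in $\core(\Z_4)$ the set $\{1,3\}$ is an orbit of $\lmlt(\core(\Z_4))$, but as a subquandle it is a two-element projection quandle, hence not connected (similarly, in $\conj(\Q_8)$ the orbit $\{i,-i\}$ is a two-element projection quandle). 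Consequently, as written you cannot apply Lemma \ref{L_n} to $O$.

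The repair is the fact you mention only in passing, and it is exactly what the paper uses inside the proof of Lemma \ref{coloring by Cayley}: since $\mathcal{K}$ is a knot, its knot quandle $\mathcal{Q}(\mathcal{K})$ is connected, and surjective homomorphic images of connected quandles are connected (because $fL_x=L_{f(x)}f$); hence $\im(c)$ itself is a finite connected quandle, and $|\im(c)|>1$ because $c$ is non-trivial. Replacing $P$ by $\im(c)$ rather than by $O$, the rest of your argument goes through verbatim and coincides with the paper's proof: Lemma \ref{L_n} yields $n$ with $L_n(\im(c))$ faithful of size greater than one, and Lemma \ref{coloring by Cayley}, iterated, yields a non-trivial coloring of $\mathcal{K}$ by the faithful quandle $L_n(\im(c))$.
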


\begin{proof}
Put together Lemma \ref{L_n} and Lemma \ref{coloring by Cayley}.
%
\end{proof}

\begin{corollary}
Let $\mathcal{K}$ be a knot. If $\mathcal{K}$ can be colored by a finite involutory quandle, then $\mathcal{K}$ can be colored by a core quandle.
\end{corollary}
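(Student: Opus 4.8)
The plan is to reduce the problem to colouring by a \emph{faithful} involutory quandle, and then to invoke the fact (recorded in the excerpt after the adjunction) that every faithful involutory quandle embeds into a core quandle.

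First I would normalise the colouring. Suppose $c\colon \mathcal{Q}(\mathcal{K})\to Q$ is a non-trivial colouring of $\mathcal{K}$ by a finite involutory quandle $Q$. Replacing $Q$ by the subquandle $\im(c)$, we may assume $Q$ is the image of $c$. Then $Q$ is finite; it is again involutory, since the identity $x*(x*y)=y$ is preserved by subquandles; it satisfies $|Q|>1$ because $c$ is non-trivial; and it is connected, because $\mathcal{Q}(\mathcal{K})$ is connected for a knot and hence so is its homomorphic image $\im(c)$. Thus we are reduced to the case where $Q$ is a finite connected involutory quandle with $|Q|>1$ colouring $\mathcal{K}$.

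Next I would pass to a faithful quotient. Apply Lemma~\ref{L_n} to obtain $n\in\mathbb{N}$ for which $P\defeq L_n(Q)$ is faithful and $|P|>1$. Since each $L_{k+1}(Q)$ is a homomorphic image of $L_k(Q)$, the quandle $P$ is again involutory. By Lemma~\ref{coloring by Cayley}, $\mathcal{K}$ is colourable by $P$. Finally, because $P$ is faithful and involutory, Proposition~\ref{i under lambda}(i) gives $\ker{i}\leq \lambda_P$, which is the identity relation, so the map $x\mapsto L_x$ is an embedding of $P$ into $\core(\lmlt(P))$. Composing a non-trivial colouring $\mathcal{Q}(\mathcal{K})\to P$ with this embedding yields a non-trivial colouring of $\mathcal{K}$ by $\core(\lmlt(P))$, a core quandle (indeed a finite one, as $\lmlt(P)\leq\sym(P)$), which is what we want.

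There is no deep obstacle here: the argument is essentially that of the preceding corollary, augmented by the embedding of faithful involutory quandles into cores. The only point demanding care is the initial reduction, since Lemma~\ref{L_n} applies only to connected quandles, forcing one to replace the target quandle by the (automatically connected) image of the colouring before invoking it; the remaining verifications — that the involutory property survives subquandle and quotient operations, and that the composed colouring stays non-trivial because the embedding is injective — are routine.
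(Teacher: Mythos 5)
Your argument is correct and follows essentially the same route as the paper: pass to a faithful quandle via the $L_n(\cdot)$ construction together with Lemma~\ref{coloring by Cayley}, then embed the resulting faithful involutory quandle into $\core(\lmlt(P))$ via $x\mapsto L_x$ (Proposition~\ref{i under lambda}). Your extra step of replacing $Q$ by the (connected) image of the colouring is a sensible way to meet the connectedness hypothesis of Lemma~\ref{L_n}, which the paper leaves implicit, but it does not change the substance of the proof.
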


\begin{proof}
Let $Q$ be an involutory quandle and $c:Q_{\mathcal{K}}\to Q$ be a coloring. Then we have a non-trivial coloring by a faithful quandle $Q'$ that embeds into $\core(\lmlt(Q'))$. Hence we have a coloring of $\mathcal{K}$ by $\core(\lmlt(Q'))$.
\end{proof}

\begin{example}
    Let $\mathcal{K}$ be a torus knot. In \cite[Section 4]{spaggiari2023coloring}, we show that $\mathcal{K}$ can there exists a coloring $c$ by $\conj(\D_n)$, the conjugation quandle over the dihedral group. Necessarily, $\im(c)$ is a subquandle of an isomorphic copy of $\core(\Z_n)$ contained in $\conj(\D_n)$.
\end{example}

    

\printbibliography

\newpage

\end{document}